\definecolor{mycolor1}{rgb}{0.105882,0.619608,0.466667}
\definecolor{mycolor2}{rgb}{0.85098,0.372549,0.00784314}
\definecolor{mycolor3}{rgb}{0.458824,0.439216,0.701961}
\definecolor{mycolor4}{rgb}{0.905882,0.160784,0.541176}
\definecolor{mycolor5}{rgb}{0.4,0.65098,0.117647}
\definecolor{mycolor6}{rgb}{0.65098,0.462745,0.113725}
\definecolor{mycolor7}{rgb}{0.901961,0.670588,0.00784314}
\definecolor{mycolor8}{rgb}{0.4,0.4,0.4}
\definecolor{mycolor9}{rgb}{0.301961,0,0.294118}
\definecolor{mycolor10}{rgb}{0.0313725,0.25098,0.505882}
\newif\ifmygrid@coordinates
\tikzset{/mygrid/step line/.style={line width=0.80pt,draw=gray!80},
         /mygrid/steplet line/.style={line width=0.25pt,draw=gray!80}}
\def\mygrid@def@coordinates(#1,#2)(#3,#4){%
    \def\mygrid@xlo{#1}%
    \def\mygrid@xhi{#3}%
    \def\mygrid@ylo{#2}%
    \def\mygrid@yhi{#4}%
}
\newcommand\DrawGrid[3][]{%
    \pgfkeys{/mygrid/.cd,coordinates=true,step=1,steplet=0.2,#1}%
    \draw[/mygrid/steplet line] #2 grid[step=\mygrid@steplet] #3;
    \draw[/mygrid/step line] #2 grid[step=\mygrid@step] #3;
    \mygrid@def@coordinates#2#3%
    \ifmygrid@coordinates%
        \draw[/mygrid/step line]
        \foreach \xpos in {\mygrid@xlo,...,\mygrid@xhi} {%
          (\xpos,\mygrid@ylo) -- ++(0,-3pt)
                              node[anchor=north] {$\xpos$}
        }
        \foreach \ypos in {\mygrid@ylo,...,\mygrid@yhi} {%
          (\mygrid@xlo,\ypos) -- ++(-3pt,0)
                              node[anchor=east] {$\ypos$}
        };
    \fi%
}
\newcommand{\remove}[1]{}
\newcommand{\removesafe}[1]{}
\newcommand{\transpose}{^\top\! }
\newcommand{\inner}[2]{\left\langle{#1},{#2}\right\rangle}
\newcommand{\innersmall}[2]{\langle{#1},{#2}\rangle}
\newcommand{\trace}{\mathrm{Tr}}
\newcommand{\Trace}{\mathrm{Tr}}
\newcommand{\im}{\mathrm{im}}
\newcommand{\Proj}{\mathrm{Proj}}
\newcommand{\T}{\mathrm{T}}
\newcommand{\Snn}{{\mathbb{S}^{n\times n}}}
\newcommand{\Rnn}{{\mathbb{R}^{n\times n}}}
\newcommand{\Rnp}{{\mathbb{R}^{n\times p}}}
\newcommand{\Rp}{{\mathbb{R}^{p}}}
\newcommand{\reals}{{\mathbb{R}}}
\newcommand{\Rn}{{\mathbb{R}^n}}
\newcommand{\Rm}{{\mathbb{R}^m}}
\newcommand{\grad}{\mathrm{grad}}
\newcommand{\Hess}{\mathrm{Hess}}
\newcommand{\diag}{\mathrm{diag}}
\newcommand{\D}{\mathrm{D}}
\newcommand{\calA}{\mathcal{A}}
\newcommand{\calN}{\mathcal{N}}
\newcommand{\calM}{\mathcal{M}}
\newcommand{\calC}{\mathcal{C}}
\newcommand{\calO}{\mathcal{O}}
\newcommand{\Id}{\operatorname{Id}} 
\newcommand{\rank}{\operatorname{rank}}
\newcommand{\nulll}{\operatorname{null}}
\newcommand{\lambdamin}{\lambda_\mathrm{min}}
\newtheorem{theorem}{Theorem} 
\newtheorem{lemma}[theorem]{Lemma}
\newtheorem{proposition}[theorem]{Proposition}
\newtheorem{corollary}[theorem]{Corollary}
\newtheorem{definition}{Definition} 
\title{The non-convex Burer--Monteiro approach works\\on smooth semidefinite programs}
\author{Nicolas Boumal$_\star$ \\ Department of Mathematics \\ Princeton University \\ \texttt{nboumal@math.princeton.edu} \And Vladislav Voroninski$_\star$ \\ Department of Mathematics \\ Massachusetts Institute of Technology \\ \texttt{vvlad@math.mit.edu} \And Afonso S.\ Bandeira \\ Department of Mathematics and Center for Data Science \\ Courant Institute of Mathematical Sciences, New York University \\ \texttt{bandeira@cims.nyu.edu}}
\newcommand{\1}{\mathbf{1}}
\begin{document}

\maketitle

{\let\thefootnote\relax\footnote{$\star$The first two authors contributed equally.}}
\setcounter{footnote}{0}

\begin{abstract}
	
Semidefinite programs (SDPs) can be solved in polynomial time by interior point methods, but scalability can be an issue. To address this shortcoming, over a decade ago, Burer and Monteiro proposed to solve SDPs with few equality constraints via rank-restricted, non-convex surrogates. Remarkably, for some applications, local optimization methods seem to converge to global optima of these non-convex surrogates reliably. Although some theory supports this empirical success, a complete explanation of it remains an open question. In this paper, we consider a class of SDPs which includes applications such as max-cut, community detection in the stochastic block model, robust PCA, phase retrieval and synchronization of rotations. We show that the low-rank Burer--Monteiro formulation of SDPs in that class almost never has any spurious local optima.
\vspace{2mm}
\\
\textcolor{blue}{This paper was corrected on April 9, 2018. Theorems~\ref{thm:masterthm} and~\ref{thm:approxtolerance} had the assumption that $\calM$~\eqref{eq:M} is a manifold. From this assumption it was stated that $\T_Y\calM = \{ \dot Y \in \Rnp : \calA(\dot Y Y\transpose + Y \dot Y\transpose) = 0 \}$, which is not true in general. To ensure this identity, the theorems now make the stronger assumption that gradients of the constraints $\calA(YY\transpose) = b$ are linearly independent for all $Y$ in $\calM$. All examples treated in the paper satisfy this assumption. Appendix~\ref{apdx:regularity} gives details.}
\end{abstract}


\section{Introduction}

We consider semidefinite programs (SDPs) of the form
\begin{align}
f^* = \min_{X\in\Snn} \inner{C}{X} \quad \textrm{ subject to }  \quad \calA(X) = b, \ X \succeq 0,
\tag{SDP}
\label{eq:SDP}
\end{align}
where $\inner{C}{X} = \trace(C\transpose X)$, $C \in \Snn$ is the symmetric cost matrix, $\calA \colon \Snn \to \Rm$ is a linear operator capturing $m$ equality constraints with right hand side $b\in\Rm$ and the variable $X$ is symmetric, positive semidefinite.
Interior point methods solve~\eqref{eq:SDP} in polynomial time~\citep{nesterov2004introductory}. In practice however, for $n$ beyond a few thousands, such algorithms run out of memory (and time), prompting research for alternative solvers.

If~\eqref{eq:SDP} has a compact search space, then it admits a global optimum of rank at most~$r$, where $\frac{r(r+1)}{2} \leq m$~\citep{pataki1998rank,barvinok1995problems}.
Thus, if one restricts the search space of~\eqref{eq:SDP} to matrices of rank at most $p$ with $\frac{p(p+1)}{2} \geq m$, then the  globally  optimal value remains unchanged. This restriction is easily enforced by factorizing $X = YY\transpose$ where $Y$ has size $n\times p$, yielding an equivalent quadratically constrained quadratic program:
\begin{align}
q^* = \min_{Y\in\Rnp} \inner{CY}{Y} \quad \textrm{ subject to } \quad \calA(YY\transpose) = b.
\tag{P}
\label{eq:QCQP}
\end{align}
In general, \eqref{eq:QCQP} is non-convex, making it a priori unclear how to solve it globally. Still, the benefits are that it is lower dimensional than~\eqref{eq:SDP} and has no conic constraint.
This has motivated~\citet{sdplr,burer2005local} to try and solve~\eqref{eq:QCQP} using local optimization methods, with surprisingly good results. They developed theory in support of this observation (details below). About their results, 
\citet[\S3]{burer2005local} write (mutatis mutandis):
\begin{quote}
	``\emph{How large must we take $p$ so that the local minima of~\eqref{eq:QCQP} are guaranteed to map to global minima of~\eqref{eq:SDP}? Our theorem asserts that we need only\footnote{The condition on $p$ and $m$ is slightly, but inconsequentially, different in~\citep{burer2005local}.} $\frac{p(p+1)}{2} > m$ (with the important caveat that positive-dimensional faces of~\eqref{eq:SDP} which are `flat' with respect to the objective function can harbor non-global local minima).}''
\end{quote}
The caveat---the existence or non-existence of non-global local optima, or their potentially adverse effect for local optimization algorithms---was not further discussed.

In this paper, assuming $\frac{p(p+1)}{2} > m$, we show that if the search space of~\eqref{eq:SDP} is \emph{compact} and if the search space of~\eqref{eq:QCQP} is a \emph{regularly defined smooth manifold}, then, for almost all cost matrices $C$,
if $Y$ satisfies first- and second-order necessary optimality conditions for~\eqref{eq:QCQP}, then $Y$ is a global optimum of~\eqref{eq:QCQP} and, since $\frac{p(p+1)}{2} \geq m$, $X = YY\transpose$ is a global optimum of~\eqref{eq:SDP}. In other words,
first- and second-order necessary optimality conditions for~\eqref{eq:QCQP} are also \emph{sufficient} for global optimality---an unusual theoretical guarantee in non-convex optimization.

Notice that this is a statement about the optimization problem itself, not about specific algorithms. Interestingly, known algorithms for optimization on manifolds converge to \emph{second-order critical points},\footnote{Second-order critical points satisfy first- and second-order necessary optimality conditions.} regardless of initialization~\citep{boumal2016globalrates}.

For the specified class of SDPs, our result improves on those of~\citep{burer2005local} in two important ways. Firstly, for almost all $C$, we formally exclude the existence of spurious local optima.\footnote{Before Prop.\ 2.3 in~\citep{burer2005local}, the authors write: ``The change of variables $X = YY\transpose$ does not introduce any extraneous local minima.'' This is sometimes misunderstood to mean~\eqref{eq:QCQP} does not have spurious local optima, when it actually means that the local optima of~\eqref{eq:QCQP} are in exact correspondence with the local optima of ``\eqref{eq:SDP} \emph{with the extra constraint $\rank(X) \leq p$},'' which is also non-convex and thus also liable to having local optima. Unfortunately, this misinterpretation has led to some confusion in the literature.} Secondly, we only require the computation of second-order critical points of~\eqref{eq:QCQP} rather than local optima (which is hard in general~\citep{vavasis1991nonlinear}). Below, we make a statement about computational complexity, and we illustrate the practical efficiency of the proposed methods through numerical experiments.

SDPs which satisfy the compactness and smoothness assumptions occur in a number of applications including Max-Cut, robust PCA,  $\mathbb{Z}_2$-synchronization, community detection, cut-norm approximation, phase synchronization, phase retrieval, synchronization of rotations and the trust-region subproblem---see Section~\ref{sec:examples} for references.





\subsection*{A simple example: the Max-Cut problem}

Given an undirected graph, Max-Cut is the NP-hard problem of clustering the~$n$ nodes of this graph in two classes, $+1$ and $-1$, such that as many edges as possible join nodes of different signs. If $C$ is the adjacency matrix of the graph, Max-Cut is expressed as
\begin{align}
	\max_{x\in\Rn} \frac{1}{4}\sum_{i,j=1}^n C_{ij}(1-x_ix_j) \quad \textrm{ s.t. } \quad x_1^2 = \cdots = x_n^2 = 1.
	\tag{Max-Cut}
	\label{eq:maxcut}
\end{align}

Introducing the positive semidefinite matrix $X = xx\transpose$, both the cost and the constraints may be expressed linearly in terms of $X$. Ignoring that $X$ has rank 1 yields the well-known convex relaxation in the form of a semidefinite program (up to an affine transformation of the cost): 
\begin{align}
	\min_{X\in\Snn} \inner{C}{X} \quad \textrm{ s.t. } \quad \diag(X) = \1, \ X \succeq 0.
	\tag{Max-Cut SDP}
	\label{eq:maxcutsdp}
\end{align}
If a solution $X$ of this SDP has rank 1, then $X=xx\transpose$ for some $x$ which is then an optimal cut. In the general case of higher rank $X$, \citet{goemans1995maxcut} exhibited the celebrated rounding scheme to produce approximately optimal cuts (within a ratio of .878) from $X$.

The corresponding Burer--Monteiro non-convex problem with rank bounded by $p$ is:
\begin{align}
	\min_{Y\in\Rnp} \inner{CY}{Y} \quad \textrm{ s.t. } \quad  \diag(YY\transpose) = \1.
	\tag{Max-Cut BM}
	\label{eq:maxcutBM}
\end{align}


The constraint $\diag(YY\transpose)=\1$ requires each row of $Y$ to have unit norm; that is: $Y$ is a point on the Cartesian product of $n$ unit spheres in $\Rp$, which is a smooth manifold. Furthermore, all $X$ feasible for the SDP have identical trace equal to $n$, so that the search space of the SDP is compact. Thus, our results stated below apply:

\begin{quote}
	\emph{For $p = \left\lceil\sqrt{2n}\,\right\rceil$, for almost all $C$, even though~\eqref{eq:maxcutBM} is non-convex,
	any local optimum $Y$ is a global optimum (and so is $X=YY\transpose$), and all saddle points have an escape (the Hessian has a negative eigenvalue).
	}
\end{quote}
We note that, for $p>n/2$, the same holds for \emph{all} $C$~\citep{boumal2015staircase}.

\subsection*{Notation}

$\Snn$ is the set of real, symmetric matrices of size $n$. A symmetric matrix $X$ is positive semidefinite ($X \succeq 0$) if and only if $u\transpose X u \geq 0$ for all $u\in\Rn$. For matrices $A, B$, the standard Euclidean inner product is $\inner{A}{B} = \trace(A\transpose B)$. The associated (Frobenius) norm is $\|A\| = \sqrt{\inner{A}{A}}$. $\Id$ is the identity operator and $I_n$ is the identity matrix of size $n$.

\section{Main results}


Our main result establishes conditions under which first- and second-order necessary optimality conditions for~\eqref{eq:QCQP} are sufficient for global optimality. Under those conditions, it is a fortiori true that global optima of~\eqref{eq:QCQP} map to global optima of~\eqref{eq:SDP}, so that local optimization methods on~\eqref{eq:QCQP} can be used to solve the higher-dimensional, cone-constrained~\eqref{eq:SDP}.

We now specify the necessary optimality conditions of~\eqref{eq:QCQP}. Under the assumptions of our main result below (Theorem~\ref{thm:masterthm}), the search space
\begin{align}
\calM & = \calM_p = \{ Y \in \Rnp : \calA(YY\transpose) = b \}
\label{eq:M}
\end{align}
is a smooth and compact manifold of dimension $np - m$. As such, it can be linearized at each point $Y\in\calM$ by a tangent space, differentiating the constraints~\citep[eq.\,(3.19)]{AMS08}:
\begin{align}
	\T_Y\calM & = \{ \dot Y \in \Rnp : \calA(\dot Y Y\transpose + Y \dot Y\transpose) = 0 \}.
	\label{eq:TYM}
\end{align}
Endowing the tangent spaces of $\calM$ with the (restricted) Euclidean metric $\inner{A}{B} = \trace(A\transpose B)$ turns $\calM$ into a Riemannian submanifold of $\Rnp$.
In general, second-order optimality conditions can be intricate to handle~\citep{ruszczynski2006nonlinear}. Fortunately, here, the smoothness of both the search space~\eqref{eq:M} and the cost function
\begin{align}
	f(Y) & = \inner{CY}{Y}
	\label{eq:f}
\end{align}
make for straightforward conditions. In spirit, they coincide with the well-known conditions for unconstrained optimization.
As further detailed in Appendix~\ref{sec:proofs}, the Riemannian gradient $\grad f(Y)$ is the orthogonal projection of the classical gradient of $f$ to the tangent space $\T_Y\calM$. The Riemannian Hessian of $f$ at $Y$ is a similarly restricted version of the classical Hessian of $f$ to the tangent space.
\begin{definition}\label{def:criticalpts}
	A (first-order) \emph{critical point} for~\eqref{eq:QCQP} is a point $Y \in \calM$ such that
	\begin{align}
	\grad f(Y) & = 0,
	\tag{1st order nec.\ opt.\ cond.}
	\end{align}
	where $\grad f(Y) \in \T_Y\calM$ is the Riemannian gradient at $Y$ of
	$f$ restricted to $\calM$.
	A \emph{second-order critical point} for~\eqref{eq:QCQP} is a critical point $Y$ such that
	\begin{align}
	\Hess f(Y) \succeq 0,
	\tag{2nd order nec.\ opt.\ cond.}
	\end{align}
	where $\Hess f(Y) \colon \T_Y\calM \to \T_Y\calM$ is the Riemannian Hessian at $Y$ of $f$ restricted to $\calM$ (a symmetric linear operator).
\end{definition}
\begin{proposition}\label{prop:optconditions}
	All local (and global) optima of~\eqref{eq:QCQP} are second-order critical points.
\end{proposition}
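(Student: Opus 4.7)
The plan is to reduce the optimality conditions on the Riemannian submanifold $\calM$ to ordinary calculus along smooth curves. Under the assumptions of Theorem~\ref{thm:masterthm}, $\calM$ is a smooth compact submanifold of $\Rnp$ with tangent space as in~\eqref{eq:TYM}; in particular, every $\dot Y \in \T_Y\calM$ arises as $c'(0)$ for some smooth curve $c\colon(-\varepsilon,\varepsilon)\to\calM$ with $c(0)=Y$. A canonical choice is $c(t) = \Exp_Y(t\dot Y)$, where $\Exp_Y$ denotes the Riemannian exponential map on $\calM$, well-defined for small $|t|$.

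Fix a local optimum $Y$ of~\eqref{eq:QCQP} and an arbitrary $\dot Y \in \T_Y\calM$, and let $c$ be any smooth admissible curve with $c(0)=Y$ and $c'(0)=\dot Y$. The scalar function $g(t) = f(c(t))$ is smooth and attains a local minimum at $t=0$, so $g'(0)=0$ and $g''(0)\geq 0$. By the chain rule together with the defining property of the Riemannian gradient (the tangent vector representing the differential of $f$ restricted to $\calM$ under the inherited Euclidean inner product), one has $g'(0) = \innerbig{\grad f(Y)}{\dot Y}$. Since this vanishes for every $\dot Y \in \T_Y\calM$ and $\grad f(Y) \in \T_Y\calM$, the first-order condition $\grad f(Y) = 0$ follows.

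For the second-order condition, I would specialize to the geodesic $c(t) = \Exp_Y(t\dot Y)$. The standard definition of the Riemannian Hessian via the Levi-Civita connection then yields $\innerbig{\Hess f(Y)[\dot Y]}{\dot Y} = g''(0) \geq 0$, and since $\dot Y$ is arbitrary and $\Hess f(Y)$ is symmetric, this gives $\Hess f(Y) \succeq 0$. Equivalently, once $\grad f(Y)=0$ is in hand one may use \emph{any} admissible curve: a second-order Taylor expansion gives $g''(0) = \innerbig{\Hess f(Y)[\dot Y]}{\dot Y} + \innerbig{\grad f(Y)}{c''(0)}$, and the second term drops out regardless of $c''(0)$.

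The main subtlety is precisely this identification of the Riemannian Hessian quadratic form with the second derivative of $f$ along a geodesic; this is the one place where the manifold structure enters nontrivially, and it is a standard Riemannian-geometric computation (made fully explicit for our setting in Appendix~\ref{sec:proofs}). Everything else is just the elementary single-variable fact that a smooth function with a local minimum at an interior point has vanishing first derivative and nonnegative second derivative there.
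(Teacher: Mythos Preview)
Your argument is correct. The paper does not actually prove this proposition: it simply cites \cite[Rem.~4.2 and Cor.~4.2]{yang2012optimality}. Your curve-based derivation is therefore more self-contained than the paper's own treatment, and it is the standard way one proves such first- and second-order necessary conditions on a Riemannian manifold.

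One small point of notation: in your alternative second-order expansion
\[
g''(0) = \innerbig{\Hess f(Y)[\dot Y]}{\dot Y} + \innerbig{\grad f(Y)}{c''(0)},
\]
the symbol $c''(0)$ should be read as the intrinsic (covariant) acceleration $\nabla_{c'}c'\big|_{t=0}$, not the ambient second derivative in $\Rnp$; with that reading the identity is exactly the product rule for the Levi-Civita connection. Since you only invoke this formula after $\grad f(Y)=0$ is established, the term vanishes regardless and the conclusion is unaffected. Your primary route via the geodesic $c(t)=\Exp_Y(t\dot Y)$ avoids this subtlety entirely.
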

\begin{proof}
	See~\cite[Rem.~4.2 and Cor.~4.2]{yang2012optimality}.
\end{proof}
%
%

We can now state our main result. In the theorem statement below,
``for almost all $C$'' means potentially troublesome cost matrices form at most a (Lebesgue) zero-measure subset of $\Snn$, in the same way that almost all square matrices are invertible.
In particular, given any matrix $C\in\Snn$, perturbing $C$ to $C+\sigma W$ where $W$ is a Wigner random matrix results in an acceptable cost matrix with probability 1, for arbitrarily small $\sigma > 0$.
\begin{theorem}\label{thm:masterthm}
	Given constraints $\calA \colon \Snn \to \Rm$, $b\in\Rm$ and $p$ satisfying $\frac{p(p+1)}{2} > m$, if
	\begin{itemize}
		\item[(i)] the search space of~\eqref{eq:SDP} is compact; and
		\item[(ii)] the search space of~\eqref{eq:QCQP} is a regularly-defined smooth manifold, in the sense that $A_1Y, \ldots, A_mY$ are linearly independent in $\Rnp$ for all $Y \in \calM$ (see Appendix~\ref{apdx:regularity}),
	\end{itemize}
	then for almost all cost matrices $C\in\Snn$, any second-order critical point of~\eqref{eq:QCQP} is globally optimal. Under these conditions, if $Y$ is globally optimal for~\eqref{eq:QCQP}, then the matrix $X = YY\transpose$ is globally optimal for~\eqref{eq:SDP}. 
\end{theorem}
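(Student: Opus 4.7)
The plan is to show that at any second-order critical point $Y$ of~\eqref{eq:QCQP}, the matrix $X = YY\transpose$ satisfies the KKT conditions for~\eqref{eq:SDP}, which under our assumptions (strong duality holds under compactness plus standard Slater-type conditions) are both necessary and sufficient for global optimality of $X$. The second sentence of the theorem then follows immediately from the Pataki--Barvinok bound: assumption (i) together with $\frac{p(p+1)}{2}\geq m$ forces the optimal values $f^*$ and $q^*$ of~\eqref{eq:SDP} and~\eqref{eq:QCQP} to coincide, so any global optimum $Y$ of~\eqref{eq:QCQP} automatically lifts to a global optimum $X = YY\transpose$ of~\eqref{eq:SDP}.

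First I would rewrite the critical-point conditions of~\eqref{eq:QCQP} in terms of the dual slack $S := C - \calA^{*}(\mu)$. Under the regularity hypothesis (ii), the normal space of $\calM$ at $Y$ is spanned by the linearly independent matrices $A_1 Y, \ldots, A_m Y$; hence at any first-order critical $Y$ there is a \emph{unique} multiplier $\mu\in\Rm$ with $CY = \calA^{*}(\mu) Y$, i.e.\ $SY = 0$. Differentiating once more, the Riemannian Hessian on $\T_Y \calM$ is the restriction of the Euclidean Hessian of the Lagrangian, namely the bilinear form $\dot Y \mapsto 2\langle S \dot Y, \dot Y \rangle$. Thus ``$Y$ is second-order critical'' translates to $\langle S \dot Y, \dot Y \rangle \geq 0$ for all $\dot Y \in \T_Y \calM$. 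Because $SY = 0$ already gives $SX = SYY\transpose = 0$, the only remaining KKT condition to establish is $S\succeq 0$.

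The easy, \emph{unconditional} case is $\rank(Y) < p$: pick any nonzero $z \in \Rp$ with $Yz = 0$, and for an arbitrary $u \in \Rn$ set $\dot Y := uz\transpose$. Then $\dot Y Y\transpose = u(Yz)\transpose = 0 = Y\dot Y\transpose$, so $\dot Y \in \T_Y \calM$ automatically. The second-order inequality collapses to $\|z\|^2 \, u\transpose S u \geq 0$; since $u$ was arbitrary, $S\succeq 0$, and $Y$ is globally optimal with no condition on $C$ whatsoever.

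The genuinely hard case, where the hypothesis ``for almost all $C$'' enters, is $\rank(Y) = p$. The plan is a Sard-type argument. Consider the smooth parameter space
\begin{align*}
\calN \ :=\ \bigl\{(Y, \mu, T) \,:\, Y \in \calM,\ \rank(Y) = p,\ \mu \in \Rm,\ T \in \Snn,\ TY = 0\bigr\}\,/\,\Op,
\end{align*}
quotiented by the gauge action $Y\mapsto YQ$ (under which $\calM$, the objective, and the condition $TY=0$ are all invariant); a direct count yields $\dim\calN = (np-m) + m + \frac{(n-p)(n-p+1)}{2} - \frac{p(p-1)}{2} = \frac{n(n+1)}{2} = \dim\Snn$, and the smooth map $\Phi(Y,\mu,T) := T + \calA^{*}(\mu)$ sends $\calN$ onto the set of cost matrices $C$ admitting a full-rank first-order critical point. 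To single out the \emph{bad} $C$, I would further restrict $\Phi$ to the sub-locus $\calN_{\text{bad}}$ on which the second-order inequality $\langle T\dot Y,\dot Y\rangle\geq 0$ holds for all $\dot Y\in \T_Y\calM$ but $T\not\succeq 0$. The main obstacle, and the core difficulty of the theorem, is to show that $\calN_{\text{bad}}$ has strictly smaller dimension than $\dim\Snn$: both added constraints are inequalities, not obviously codimension-reducing. The lever is the excess $\frac{p(p+1)}{2}-m \geq 1$: for any hypothetical negative eigenvector $v\in(\col Y)^\perp$ of $T$, rank-$1$ tangent vectors $\dot Y = v z\transpose$ (those $z\in\Rp$ for which $\calA(v(Yz)\transpose + (Yz)v\transpose)=0$) force $\|z\|^2 v\transpose T v\geq 0$ against $v\transpose T v<0$, producing additional polynomial equations on $(Y,T,\mu)$. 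Formalizing that these equations cut $\calN$ down by at least one dimension, so that Sard's theorem makes $\Phi(\calN_{\text{bad}})$ of Lebesgue measure zero in $\Snn$, is the technical heart of the proof.
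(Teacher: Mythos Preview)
Your treatment of the rank-deficient case is correct and matches the paper exactly: when $\rank Y < p$, the rank-one tangent directions $uz\transpose$ with $Yz=0$ force $S\succeq 0$ from the second-order inequality, and KKT then certifies global optimality.

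For the full-rank case, however, you are working much harder than necessary, and your proposed completion via $\calN_{\text{bad}}$ does not close. The paper's argument is far simpler and bypasses second-order conditions entirely: it shows that for almost all $C$ there are \emph{no} full-rank first-order critical points at all. The key observation is already implicit in your own setup, but you overlooked it: your map $\Phi(Y,\mu,T)=T+\calA^*(\mu)$ does not depend on $Y$. Hence its image is contained in
\[
\bigl\{T+\calA^*(\mu)\ :\ T\in\Snn,\ \nulll(T)\geq p,\ \mu\in\Rm\bigr\}\ \subseteq\ \calN_{p}+\im(\calA^*),
\]
a set of dimension at most $\frac{n(n+1)}{2}-\frac{p(p+1)}{2}+m<\frac{n(n+1)}{2}$. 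That is the entire argument: if $Y$ is first-order critical with $\rank Y=p$, then $SY=0$ forces $\nulll(C-\calA^*(\mu))\geq p$, placing $C$ in a measure-zero subset of $\Snn$. Your dimension count $\dim\calN=\dim\Snn$ is correct but misleading: the fibers of $\Phi$ in the $Y$-direction have positive dimension (quotienting only by $\Op$ does not remove all of it), so the image dimension drops automatically. There is no need to isolate $\calN_{\text{bad}}$.

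Moreover, your proposed route through $\calN_{\text{bad}}$ would actually fail as stated. In the full-rank case, $\dot Y=vz\transpose\in\T_Y\calM$ requires $\innersmall{A_iY}{vz\transpose}=0$ for $i=1,\ldots,m$, which is $m$ linear conditions on $z\in\Rp$. The hypothesis is only $\frac{p(p+1)}{2}>m$, not $p>m$; already for $p=2$, $m=2$ these conditions generically force $z=0$, so the rank-one tangent directions you rely on need not exist. The ``technical heart'' you identify is therefore the wrong one; the real leverage of $\frac{p(p+1)}{2}>m$ is precisely the codimension of $\calN_p$ in $\Snn$, applied to first-order critical points only.
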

The assumptions are discussed in the next section. The proof---see Appendix~\ref{sec:proofs}---follows directly from the combination of two intermediate results:
\begin{enumerate}
	\item If $Y$ is \emph{rank deficient} and second-order critical for~\eqref{eq:QCQP}, then it is globally optimal and $X=YY\transpose$ is optimal for~\eqref{eq:SDP}; and
	\item If $\frac{p(p+1)}{2} > m$, then, for almost all $C$, every first-order critical $Y$ is rank-deficient.
\end{enumerate}
The first step holds in a more general context,
as previously established by~\citet{sdplr,burer2005local}. The second step is new and crucial, as it allows to formally exclude the existence of spurious local optima, generically in $C$, thus resolving the caveat mentioned in the introduction.




The smooth structure of~\eqref{eq:QCQP} naturally suggests using \emph{Riemannian optimization} to solve it~\citep{AMS08}, which is something that was already proposed by~\citet{journee2010low} in the same context.
Importantly, known algorithms
converge to second-order critical points regardless of initialization.
We state here a recent computational result to that effect.
\begin{proposition}\label{prop:complexity}
	Under the numbered assumptions of Theorem~\ref{thm:masterthm}, the Riemannian trust-region method (RTR)~\citep{genrtr} initialized with any $Y_0\in\calM$ returns in $\calO(1/\varepsilon_g^2\varepsilon_H^{} + 1/\varepsilon_H^3)$ iterations a point $Y\in\calM$ such that
	\begin{align*}
		f(Y) & \leq f(Y_0), & \|\grad f(Y)\| & \leq \varepsilon_g, & \textrm{ and } & & \Hess f(Y) \succeq -\varepsilon_H \Id.
	\end{align*}
\end{proposition}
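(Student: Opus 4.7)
The plan is to invoke the general worst-case iteration complexity bound for the Riemannian trust-region method established in~\citep{boumal2016globalrates} and verify that all of its hypotheses are satisfied in our setting. That result says: if $\calM$ is a complete Riemannian manifold, if $f$ is bounded below on the sublevel set $\{Y : f(Y) \leq f(Y_0)\}$, and if $f$ has a Lipschitz-continuous gradient and Hessian along the retractions used by RTR on that sublevel set, then RTR started from $Y_0$ returns an approximate second-order critical point satisfying the stated gradient and Hessian inequalities in $\calO(1/\varepsilon_g^2\varepsilon_H + 1/\varepsilon_H^3)$ outer iterations, with objective values non-increasing along the iterates. So the task reduces to checking these regularity assumptions in our context.

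First I would argue that $\calM$ is a smooth, properly embedded Riemannian submanifold of $\Rnp$: assumption~(ii) guarantees constraint regularity, so $\calM$ is a smooth manifold with tangent spaces as in~\eqref{eq:TYM}, and inheriting the Euclidean metric makes it Riemannian. Second, compactness of $\calM$ follows from assumption~(i): since the feasible set of~\eqref{eq:SDP} is compact and $X = YY\transpose$ is a continuous surjection from $\calM$ onto the rank-$\leq p$ feasible matrices of~\eqref{eq:SDP}, and since $\|Y\|_\mathrm{F}^2 = \trace(YY\transpose)$ is bounded on that image, $\calM$ is closed and bounded in $\Rnp$, hence compact. In particular $\calM$ is geodesically complete and $f$ is bounded on $\calM$, so the sublevel set $\{f \leq f(Y_0)\}$ is compact.

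Third, I would verify the Lipschitz-type smoothness hypotheses. The cost $f(Y) = \inner{CY}{Y}$ is a quadratic polynomial on $\Rnp$, so its Euclidean gradient $2CY$ and Euclidean Hessian $Y \mapsto 2CY$ are globally Lipschitz (the Hessian is constant). The retraction used by RTR on $\calM$ can be taken to be a smooth one (for instance the metric projection, well-defined on a tubular neighborhood, or any smooth retraction available in closed form for the specific manifold). On the compact set $\calM$, the pullbacks $f \circ R_Y$ then have uniformly Lipschitz gradient and Hessian, as required by the complexity framework of~\citep{boumal2016globalrates}; this is a standard consequence of $f$ being a polynomial and $R$ being a smooth retraction on a compact manifold.

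With all assumptions verified, the conclusion of the general RTR complexity theorem applies verbatim and yields exactly the claimed iteration bound and the three output conditions. The main subtlety, and the only step that required an actual verification rather than a direct citation, is packaging compactness of~\eqref{eq:SDP} plus constraint regularity into the statement that $\calM$ itself is a compact Riemannian submanifold on which the polynomial $f$ enjoys the smoothness constants needed by~\citep{boumal2016globalrates}; once that is established, no additional work is needed.
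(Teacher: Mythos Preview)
Your proposal is correct and follows essentially the same approach as the paper: invoke the complexity result of~\citep{boumal2016globalrates} and verify its hypotheses by noting that $f$ has locally Lipschitz continuous gradient and Hessian on $\Rnp$ and that $\calM$ is a compact submanifold. The paper's own proof is a one-line citation; you simply spell out the verification of compactness and smoothness in more detail than the authors do.
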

\begin{proof}
	Apply the main results of~\citep{boumal2016globalrates} using that $f$ has locally Lipschitz continuous gradient and Hessian in $\Rnp$ and $\calM$ is a compact submanifold of $\Rnp$.
\end{proof}
Essentially, each iteration of RTR requires evaluation of one cost and one gradient, a bounded number of Hessian-vector applications, and one projection from $\Rnp$ to $\calM$. In many important cases, this projection amounts to Gram--Schmidt orthogonalization of small blocks of $Y$---see Section~\ref{sec:examples}. 

Proposition~\ref{prop:complexity} bounds worst-case iteration counts for arbitrary initialization. In practice, a good initialization point may be available, making the local convergence rate of RTR more informative. For RTR, one may expect superlinear or even quadratic local convergence rates near isolated local minimizers~\citep{genrtr}.
While minimizers are not isolated in our case~\citep{journee2010low}, experiments show a characteristically superlinear local convergence rate in practice~\citep{boumal2015staircase}.
This means high accuracy solutions can be achieved, as demonstrated in Appendix~\ref{sec:XP}. 

Thus, under the conditions of Theorem~\ref{thm:masterthm}, generically in $C$, RTR converges to global optima. In practice, the algorithm returns after a finite number of steps, and only \emph{approximate} second-order criticality is guaranteed. Hence, it is interesting to bound the optimality gap in terms of the approximation quality. Unfortunately, we do not establish such a result for small $p$. Instead, we give an a posteriori computable optimality gap bound which holds for \emph{all} $p$ and for \emph{all} $C$.
In the following statement, the dependence of $\calM$ on $p$ is explicit, as $\calM_p$. The proof is in Appendix~\ref{sec:proofs}.


\begin{theorem} \label{thm:approxtolerance} 
	Let $R < \infty$ be the maximal trace of any $X$ feasible for~\eqref{eq:SDP}.
	For any $p$ such that $\calM_p$ and $\calM_{p+1}$ are smooth manifolds (even if $\frac{p(p+1)}{2} \leq m$) and for any $Y\in\calM_p$, form $\tilde Y = \left[ Y | 0_{n\times 1} \right]$ in $\calM_{p+1}$. The optimality gap at $Y$ is bounded as
	\begin{align}
		0 \leq 2(f(Y)-f^*) \leq \sqrt{R} \|\grad f(Y)\| - R \lambdamin(\Hess f(\tilde Y)).
		\label{eq:lambdaminbound}
	\end{align}
	%
	If all feasible $X$ have the same trace $R$ and there exists a positive definite feasible $X$, then the bound simplifies to
	\begin{align}
		0 \leq 2(f(Y)-f^*) \leq - R \lambdamin(\Hess f(\tilde Y))
	\end{align}
	so that $\|\grad f(Y)\|$ needs not be controlled explicitly.
	If $p>n$, the bounds hold with $\tilde Y = Y$.
\end{theorem}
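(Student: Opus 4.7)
The strategy is SDP duality driven by a Lagrange multiplier extracted from $Y$. Regularity of $\calM_p$ means the normal space at $Y$ is spanned by $A_1 Y, \ldots, A_m Y$, so there is a unique $\mu \in \Rm$ such that $\grad f(Y) = 2SY$ with $S := C - \calA^*(\mu) \in \Snn$. Since $\tilde Y\tilde Y\transpose = YY\transpose$, the same $\mu$ serves as a Lagrange multiplier at $\tilde Y \in \calM_{p+1}$, and the standard bordered-Hessian identity for equality-constrained quadratics gives $\inner{\dot{\tilde Y}}{\Hess f(\tilde Y)[\dot{\tilde Y}]} = 2\inner{\dot{\tilde Y}}{S\dot{\tilde Y}}$ on the entire tangent space $\T_{\tilde Y}\calM_{p+1}$.

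The key step is to exploit the fact that the last column of $\tilde Y$ vanishes, which makes every $\dot{\tilde Y} = [0_{n\times p}\,|\,v]$ with $v \in \Rn$ automatically tangent at $\tilde Y$. Testing the Hessian on such directions yields $2v\transpose S v \geq \lambdamin(\Hess f(\tilde Y))\|v\|^2$ for every $v$, equivalently $S \succeq \tfrac{1}{2}\lambdamin(\Hess f(\tilde Y))\,I_n$. To convert this PSD certificate into an optimality gap, I would write $f(Y) - f^* = \inner{S}{X - X^*}$ for any SDP-optimal $X^*$ (using the identity $\inner{C}{\bar X} = \inner{S}{\bar X} + b\transpose\mu$ whenever $\calA(\bar X) = b$), then bound each piece separately: $\inner{S}{X} = \tfrac{1}{2}\inner{Y}{\grad f(Y)} \leq \tfrac{1}{2}\sqrt{R}\|\grad f(Y)\|$ by Cauchy--Schwarz and $\|Y\|^2 = \trace(X) \leq R$; and $\inner{S}{X^*} \geq \tfrac{1}{2}\lambdamin(\Hess f(\tilde Y))\trace(X^*) \geq \tfrac{R}{2}\lambdamin(\Hess f(\tilde Y))$ by combining the Hessian bound with $\trace(X^*) \leq R$ (the easy sub-case $\lambdamin > 0$ is handled separately: then $S \succ 0$ is dual feasible and weak duality alone gives the estimate). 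Doubling yields the main inequality.

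For the simplified bound, the hypothesis that every feasible $X$ has trace $R$ forces $I_n \in \im \calA^*$, which in turn makes the radial direction $Y$ normal to $\T_Y \calM_p$; hence $\inner{Y}{\grad f(Y)} = 0$, the gradient term $\inner{S}{X}$ vanishes, and $\trace(X^*) = R$ erases the sign caveat in the Hessian bound (Slater supplies the optimal $X^*$ through strong duality). For $p > n$, $Y$ itself is column rank-deficient: choosing $u \in \nulll(Y) \setminus \{0\}$ and arbitrary $w \in \Rn$ produces a genuine tangent direction $\dot Y = wu\transpose$ at $Y$ (since $\dot Y Y\transpose + Y\dot Y\transpose = wu\transpose Y\transpose + Yuw\transpose = 0$), which recovers the same PSD certificate at $\tilde Y = Y$. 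The main obstacle is recognizing that lifting manufactures a free $\Rn$-worth of tangent directions that convert the Riemannian second-order condition into a genuine semidefinite statement about a dual candidate; the remainder is weak-duality bookkeeping.
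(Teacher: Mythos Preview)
Your proposal is correct and follows essentially the same route as the paper: the paper packages your rank-deficiency step (testing the Hessian at $\tilde Y$ on directions $[0\,|\,v]$ to get $S \succeq \tfrac{1}{2}\lambdamin(\Hess f(\tilde Y)) I_n$) as Lemma~\ref{lem:fromHesstoS}, and your weak-duality bookkeeping (writing $f(Y)-f^* = \inner{S}{X-X^*}$ and bounding each piece) as Lemma~\ref{lem:optimgap}, then invokes both after noting $S(\tilde Y)=S(Y)$. One small misattribution: the Slater hypothesis is not there to supply an optimal $X^*$ via strong duality (compactness already guarantees an optimizer), but rather is needed together with the constant-trace hypothesis to conclude $I_n \in \im\calA^*$---constant trace alone does not force this, since the differences $X-X'$ span $\ker\calA$ only when $X'$ is a relative interior point.
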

In particular, for $p=n+1$, the bound can be controlled a priori: approximate second-order critical points are approximately optimal, for any $C$.\footnote{With $p=n+1$, problem~\eqref{eq:QCQP} is no longer lower dimensional than~\eqref{eq:SDP}, but retains the advantage of not involving a positive semidefiniteness constraint.}
\begin{corollary}
	Under the assumptions of Theorem~\ref{thm:approxtolerance}, if $p=n+1$ and $Y\in\calM$ satisfies both $\|\grad f(Y)\| \leq \varepsilon_g$ and $\Hess f(Y) \succeq -\varepsilon_H \Id$, then $Y$ is approximately optimal in the sense that
	\begin{align*}
	0 \leq 2(f(Y)-f^*) \leq \sqrt{R} \varepsilon_g + R \varepsilon_H.
	\end{align*}
	Under the same condition as in Theorem~\ref{thm:approxtolerance}, the bound can be simplified to $R \varepsilon_H$.
\end{corollary}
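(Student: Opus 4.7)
The plan is to deduce the corollary almost directly from Theorem~\ref{thm:approxtolerance}, which does all the heavy lifting; the role of the corollary is only to re-express the a posteriori bound in terms of a priori approximation parameters $\varepsilon_g, \varepsilon_H$ in the special case $p = n+1$.

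The first step is to invoke the last sentence of Theorem~\ref{thm:approxtolerance}: since $p = n+1 > n$, the padded point $\tilde Y = [Y | 0_{n\times 1}]$ may be replaced by $Y$ itself, so the bound~\eqref{eq:lambdaminbound} becomes
\begin{align*}
0 \leq 2(f(Y) - f^*) \leq \sqrt{R}\,\|\grad f(Y)\| - R\,\lambdamin(\Hess f(Y)).
\end{align*}
Next, I would substitute the two approximate second-order criticality hypotheses. The hypothesis $\|\grad f(Y)\| \leq \varepsilon_g$ bounds the first term by $\sqrt{R}\,\varepsilon_g$. The hypothesis $\Hess f(Y) \succeq -\varepsilon_H \Id$ translates directly to $\lambdamin(\Hess f(Y)) \geq -\varepsilon_H$, so that $-R\,\lambdamin(\Hess f(Y)) \leq R\,\varepsilon_H$. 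Adding the two estimates yields $2(f(Y)-f^*) \leq \sqrt{R}\,\varepsilon_g + R\,\varepsilon_H$, which is the stated bound.

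For the simplified form, I would instead invoke the second half of Theorem~\ref{thm:approxtolerance}: when all feasible $X$ have the same trace $R$ and there exists a positive definite feasible $X$, the gradient term vanishes from the estimate, leaving $2(f(Y)-f^*) \leq -R\,\lambdamin(\Hess f(\tilde Y))$. Again using $p > n$ to identify $\tilde Y$ with $Y$ and then bounding $\lambdamin(\Hess f(Y)) \geq -\varepsilon_H$, the bound collapses to $R\,\varepsilon_H$.

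There is no genuine obstacle here: the corollary is essentially a pointwise specialization of Theorem~\ref{thm:approxtolerance} with the a posteriori quantities $\|\grad f(Y)\|$ and $\lambdamin(\Hess f(Y))$ replaced by the a priori tolerances $\varepsilon_g$ and $-\varepsilon_H$. The only thing worth flagging is that the step from $\Hess f(Y) \succeq -\varepsilon_H \Id$ to $\lambdamin(\Hess f(Y)) \geq -\varepsilon_H$ must be read as a statement about the Riemannian Hessian viewed as a symmetric operator on $\T_Y\calM$, consistent with the operator-order notation $\Hess f(Y) \succeq 0$ used throughout Definition~\ref{def:criticalpts}.
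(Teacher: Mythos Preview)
Your proposal is correct and matches the paper's approach: the corollary is an immediate specialization of Theorem~\ref{thm:approxtolerance}, using that $p=n+1>n$ permits $\tilde Y = Y$ and then bounding $\|\grad f(Y)\|$ and $-\lambdamin(\Hess f(Y))$ by $\varepsilon_g$ and $\varepsilon_H$ respectively. The paper does not even spell out a separate proof, treating the corollary as a direct consequence.
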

This works well with Proposition~\ref{prop:complexity}. For any $p$, equation~\eqref{eq:lambdaminbound} also implies the following:
\begin{align*}
	\lambdamin(\Hess f(\tilde Y)) \leq - \frac{2(f(Y)-f^*) - \sqrt{R}\|\grad f(Y)\|}{R}.
\end{align*}
That is,
for any $p$ and any $C$, an approximate critical point $Y$ in $\calM_p$ which is far from optimal maps to a comfortably-escapable approximate saddle point $\tilde Y$ in $\calM_{p+1}$.

This suggests an algorithm as follows. For a starting value of $p$ such that $\calM_p$ is a manifold, use RTR to compute an approximate second-order critical point $Y$. Then, form $\tilde Y$ in $\calM_{p+1}$ and test the left-most eigenvalue of $\Hess f(\tilde Y)$.\footnote{It may be more practical to test $\lambdamin(S)$~\eqref{eq:S} rather than $\lambdamin(\Hess f)$. Lemma~\ref{lem:fromHesstoS} relates the two. See~\citep[\S3.3]{journee2010low} to construct escape tangent vectors from $S$.} If it is close enough to zero, this provides a good bound on the optimality gap. If not, use an (approximate) eigenvector associated to $\lambdamin(\Hess f(\tilde Y))$ to escape the approximate saddle point and apply RTR from that new point in $\calM_{p+1}$; iterate. In the worst-case scenario, $p$ grows to $n+1$, at which point all approximate second-order critical points are approximate optima. Theorem~\ref{thm:masterthm} suggests $p = \left\lceil \sqrt{2m} \, \right\rceil$ should suffice for $C$ bounded away from a zero-measure set. Such an algorithm already features with less theory in~\citep{journee2010low} and~\citep{boumal2015staircase}; in the latter, it is called the \emph{Riemannian staircase}, for it lifts~\eqref{eq:QCQP} floor by floor.

\subsection*{Related work}

Low-rank approaches to solve SDPs have featured in a number of recent research papers. We highlight just two which illustrate different classes of SDPs of interest.

\citet{shah2016biconvex} tackle SDPs with linear cost and linear constraints (both equalities and inequalities) via low-rank factorizations, assuming the matrices appearing in the cost and constraints are positive semidefinite. They propose a non-trivial initial guess to partially overcome non-convexity with great empirical results, but do not provide optimality guarantees.

\citet{bhojanapalli2016dropping} on the other hand consider the minimization of a convex cost function over positive semidefinite matrices, without constraints. Such problems could be obtained from generic SDPs by penalizing the constraints in a Lagrangian way. Here too, non-convexity is partially overcome via non-trivial initialization, with global optimality guarantees under some conditions.

Also of interest are recent results about the harmlessness of non-convexity in low-rank matrix completion~\citep{ge2016matrix,bhojanapalli2016global}. Similarly to the present work, the authors there show there is no need for special initialization despite non-convexity.




\section{Discussion of the assumptions}

Our main result, Theorem~\ref{thm:masterthm}, comes with geometric assumptions on the search spaces of both~\eqref{eq:SDP} and~\eqref{eq:QCQP}
which we now discuss. Examples of SDPs which fit the assumptions of Theorem~\ref{thm:masterthm} are featured in the next section.

The assumption that the search space of~\eqref{eq:SDP}, 
\begin{align}
	\calC = \{ X \in \Snn : \calA(X) = b, X \succeq 0 \},
	\label{eq:C}
\end{align}
is \emph{compact} works in pair with the assumption $\frac{p(p+1)}{2} > m$ as follows. For~\eqref{eq:QCQP} to reveal the global optima of~\eqref{eq:SDP}, it is necessary that~\eqref{eq:SDP} admits a solution of rank at most~$p$. One way to ensure this is via the Pataki--Barvinok theorems~\citep{pataki1998rank,barvinok1995problems}, which state that all \emph{extreme points} of $\calC$ have rank $r$ bounded as $\frac{r(r+1)}{2} \leq m$. Extreme points are faces of dimension zero (such as vertices for a cube). When optimizing a linear cost function $\inner{C}{X}$ over a compact convex set $\calC$, at least one extreme point is a global optimum~\cite[Cor.\,32.3.2]{rockafellar1997convex}---this is not true in general if $\calC$ is not compact. Thus, under the assumptions of Theorem~\ref{thm:masterthm}, there is a point $Y\in\calM$ such that $X = YY\transpose$ is an optimal extreme point of~\eqref{eq:SDP}; then, of course, $Y$ itself is optimal for~\eqref{eq:QCQP}.

In general, the Pataki--Barvinok bound is tight, in that there exist extreme points of rank up to that upper-bound (rounded down)---see for example~\citep{laurent1996facial} for the Max-Cut SDP and~\citep{boumal2015staircase} for the Orthogonal-Cut SDP. Let $C$ (the cost matrix) be the negative of such an extreme point. Then, the unique optimum of~\eqref{eq:SDP} is that extreme point, showing that $\frac{p(p+1)}{2} \geq m$ is necessary for~\eqref{eq:SDP} and~\eqref{eq:QCQP} to be equivalent for all $C$. We further require a strict inequality because our proof relies on properties of rank deficient $Y$'s in $\calM$.


The assumption that $\calM$ (eq.~\eqref{eq:M}) is a \emph{regularly-defined smooth manifold} works in pair with the am\-bi\-tion that the result should hold for (almost) all cost matrices $C$. The starting point is that, for a given non-convex smooth optimization problem---even a quadratically constrained quadratic program---computing local optima is hard in general~\citep{vavasis1991nonlinear}. Thus, we wish to restrict our attention to efficiently computable points, such as points which satisfy first- and second-order KKT conditions for~\eqref{eq:QCQP}---see~\cite[\S2.2]{sdplr} and \cite[\S3]{ruszczynski2006nonlinear}. This only makes sense if global optima satisfy the latter, that is, if KKT conditions are necessary for optimality. A global optimum $Y$ necessarily satisfies KKT conditions if \emph{constraint qualifications} (CQs) hold at $Y$~\citep{ruszczynski2006nonlinear}. The standard CQs for equality constrained programs are Robinson's conditions or metric regularity (they are here equivalent). They read as follows, assuming $\calA(YY\transpose)_i = \inner{A_i}{YY\transpose}$ for some matrices $A_1, \ldots, A_m \in \Snn$:
\begin{align}
	\textrm{CQs hold at } Y \textrm{ if } A_1Y, \ldots, A_mY \textrm{ are linearly independent in } \Rnp.
	\label{eq:CQ}
\end{align}
Considering almost all $C$, global optima could, a priori, be almost anywhere in $\calM$.
To simplify, we require CQs to hold at all $Y$'s in $\calM$ rather than only at the (unknown) global optima.
Under this condition, the constraints are independent at each point and ensure $\calM$ is a smooth embedded submanifold of $\Rnp$ of codimension $m$~\citep[Prop.\,3.3.3]{AMS08}. Indeed, tangent vectors $\dot Y \in \T_Y\calM$~\eqref{eq:TYM} are exactly those vectors that satisfy $\innersmall{A_i Y}{\dot Y} = 0$: under CQs, the $A_iY$'s form a basis of the normal space to the manifold at $Y$. 


Finally, we note that Theorem~\ref{thm:masterthm} only applies for \emph{almost} all $C$, rather than all $C$. To justify this restriction, if indeed it is justified, one should exhibit a matrix $C$ that leads to suboptimal second-order critical points while other assumptions are satisfied. We do not have such an example. We do observe that \eqref{eq:maxcutsdp} on cycles of certain even lengths
has a unique solution of rank 1, while the corresponding~\eqref{eq:maxcutBM} with $p=2$ has suboptimal local optima (strictly, if we quotient out symmetries). This at least suggests it is not enough, for generic $C$, to set $p$ just larger than the rank of the solutions of the SDP. (For those same examples, at $p=3$, we consistently observe convergence to global optima.)

\section{Examples of smooth SDPs}\label{sec:examples}

The canonical examples of SDPs which satisfy the assumptions in Theorem~\ref{thm:masterthm} are those where the diagonal blocks of $X$ or their traces are fixed. We note that the algorithms and the theory continue to hold for complex matrices, where the set of Hermitian matrices of size $n$ is treated as a real vector space of dimension $n^2$ (instead of $\frac{n(n+1)}{2}$ in the real case) with inner product $\inner{H_1}{H_2} = \Re\left\{ \Trace(H_1^*H_2^{}) \right\}$, so that occurrences of $\frac{p(p+1)}{2}$ are replaced by $p^2$.

Certain concrete examples of SDPs include:
\begin{align}
	\min_X \inner{C}{X} & \textrm{ s.t. } \Trace(X) = 1, X\succeq 0; \tag{fixed trace} \\
	\min_X \inner{C}{X} & \textrm{ s.t. } \diag(X) = \mathbf{1}, X\succeq 0; \tag{fixed diagonal} \\
	\min_X \inner{C}{X} & \textrm{ s.t. } X_{ii} = I_d, X\succeq 0. \tag{fixed diagonal blocks}
\end{align}
Their rank-constrained counterparts read as follows (matrix norms are Frobenius norms):
\begin{align}
\min_{Y\colon n\times p} \inner{CY}{Y} & \textrm{ s.t. } \|Y\| = 1; \tag{sphere} \\
\min_{Y\colon n\times p} \inner{CY}{Y} & \textrm{ s.t. } Y\transpose = \begin{bmatrix} y_1 & \cdots & y_n \end{bmatrix} \textrm{ and } \|y_i\| = 1 \textrm{ for all } i; \tag{product of spheres} \\
\min_{Y\colon qd\times p} \inner{CY}{Y} & \textrm{ s.t. } Y\transpose = \begin{bmatrix} Y_1 & \cdots & Y_{q} \end{bmatrix} \textrm{ and } Y_i\transpose Y_i^{} = I_d \textrm{ for all } i. \tag{product of Stiefel}
\end{align}

The first example has only one constraint: the SDP always admits an optimal rank~1 solution, corresponding to an eigenvector associated to the left-most eigenvalue of $C$. This generalizes to the trust-region subproblem as well.

For the second example, in the real case, $p=1$ forces $y_i = \pm 1$, allowing to capture combinatorial problems such as Max-Cut~\citep{goemans1995maxcut}, $\mathbb{Z}_2$-synchronization~\citep{javanmard2015phase} and community detection in the stochastic block model~\citep{abbe2014exact,bandeira2016lowrankmaxcut}. The same SDP is central in a formulation of robust PCA~\citep{mccoy2011robustpca} and is used to approximate the cut-norm of a matrix~\citep{alon2006cutnorm}. Theorem~\ref{thm:masterthm} states that for almost all $C$, $p =  \left\lceil \sqrt{2n}\, \right\rceil$ is sufficient.
In the complex case, $p=1$ forces $|y_i| = 1$, allowing to capture problems where phases must be recovered; in particular, phase synchronization~\citep{bandeira2014tightness,singer2010angular} and phase retrieval via Phase-Cut~\citep{waldspurger2012phase}. For almost all $C$, it is then sufficient to set $p = \left\lfloor \sqrt{n}+1 \right\rfloor$.

In the third example, $Y$ of size $n\times p$ is divided in $q$ slices of size $d\times p$, with $p \geq d$. Each slice has orthonormal rows. For $p=d$, the slices are orthogonal (or unitary) matrices, allowing to capture Orthogonal-Cut~\citep{bandeira2013approximating} and the related problems of synchronization of rotations~\citep{wang2012LUD} and permutations. Synchronization of rotations is an important step in simultaneous localization and mapping, for example.
Here, it is sufficient for almost all $C$ to let $p = \left\lceil \sqrt{d(d+1)q} \, \right\rceil$.

SDPs with constraints that are combinations of the above examples can also have the smoothness property;
the right-hand sides $1$ and $I_d$ can be replaced by any positive definite right-hand sides by a change of variables. Another simple rule to check is if the constraint matrices $A_1, \ldots, A_m \in \Snn$ such that $\calA(X)_i = \inner{A_i}{X}$ satisfy $A_iA_j = 0$ for all $i \neq j$ (note that this is stronger than requiring $\inner{A_i}{A_j} = 0$), see~\citep{journee2010low}.

\section{Conclusions}



The Burer--Monteiro approach consists in replacing optimization of a linear function $\inner{C}{X}$ over the convex set $\{X\succeq 0 : \calA(X) = b\}$ with optimization of the quadratic function $\inner{CY}{Y}$ over the non-convex set $\{Y\in\Rnp : \calA(YY\transpose) = b\}$. It was previously known that, if the convex set is compact and $p$ satisfies $\frac{p(p+1)}{2} \geq m$ where $m$ is the number of constraints, then these two problems have the same global optimum. It was also known from~\citep{burer2005local} that spurious local optima $Y$, if they exist, must map to special faces of the compact convex set, but without statement as to the prevalence of such faces or the risk they pose for local optimization methods. In this paper we showed that, if the set of $X$'s is compact and the set of $Y$'s is a regularly-defined smooth manifold, and if $\frac{p(p+1)}{2} > m$, then for almost all $C$, the non-convexity of the problem in $Y$ is benign, in that all $Y$'s which satisfy second-order necessary optimality conditions are in fact globally optimal. 
%
%

We further reference the Riemannian trust-region method~\citep{genrtr} to solve the problem in $Y$, as it was recently guaranteed to converge from any starting point to a point which satisfies second-order optimality conditions, with global convergence rates~\citep{boumal2016globalrates}. 
In addition, for $p=n+1$, we guarantee that approximate satisfaction of second-order conditions implies approximate global optimality. We note that the $1/\varepsilon^3$ convergence rate in our results may be pessimistic. 
Indeed, the numerical experiments clearly show that high accuracy solutions can be computed fast using optimization on manifolds, at least for certain applications.

Addressing a broader class of SDPs, such as those with inequality constraints or equality constraints that may violate our smoothness assumptions, could perhaps be handled by penalizing those constraints in the objective in an augmented Lagrangian fashion. We also note that, algorithmically, the Riemannian trust-region method we use applies just as well to nonlinear costs in the SDP.
We believe that extending the theory presented here to broader classes of problems is a good direction for future work.

\section*{Acknowledgment}

VV was partially supported by the Office of Naval Research. ASB was supported by NSF Grant DMS-1317308. Part
of this work was done while ASB was with the Department of
Mathematics at the Massachusetts Institute of Technology. We thank Wotao Yin and Michel Goemans for helpful discussions.


{\small
\bibliographystyle{plainnat}
\bibliography{../../../boumal}
}

\clearpage

\appendix

\section{Proofs and additional lemmas} \label{sec:proofs}

We start by working out explicit formulas for the Riemannian gradient and Hessian which appear in Definition~\ref{def:criticalpts}. Let $\Proj_Y \colon \Rnp \to \T_Y\calM$ be the orthogonal projector to the tangent space at $Y$ (eq.~\eqref{eq:TYM}), and let
\begin{align}
\nabla f(Y) & = 2CY, & \nabla^2 f(Y)[\dot Y] & = 2C\dot Y
\end{align}
be the (Euclidean) gradient and Hessian of the cost function~\eqref{eq:f}. The Riemannian gradient and Hessian of $f$ on $\calM$ are related to these as follows~\citep[see][eqs\,(3.37),~(5.15)]{AMS08}:
\begin{align}
\grad f(Y) & = \Proj_Y \nabla f(Y), \\ 
\forall \dot Y \in \T_Y\calM, \quad \Hess f(Y)[\dot Y] & = \Proj_Y \D\left( Y \mapsto \grad f(Y) \right)(Y)[\dot Y].
\label{eq:RiemannianHessian}
\end{align}
Let us focus on the gradient first. Since $\grad f(Y)$ is a tangent vector at $Y$~\eqref{eq:TYM},\footnote{For non-symmetric $B\in\Rnn$, note that $\calA(B) = \calA\big( \frac{B+B\transpose}{2} \big)$.}
\begin{align}
\calA(\grad f(Y)Y\transpose) & = 0,
\end{align}
and since it is the orthogonal projection of $\nabla f(Y)$ to the tangent space, there exists $\mu \in \Rm$ such that
\begin{align}
\grad f(Y) + 2 \calA^*(\mu)Y & = \nabla f(Y) = 2CY,
\label{eq:gradRandE}
\end{align}
where $\calA^* \colon \Rm \to \Snn$ is the adjoint of $\calA$. Indeed, considering symmetric matrices $A_1, \ldots, A_m$ such that $\calA(X)_i = \inner{A_i}{X}$, matrices $\calA^*(\mu)Y = \mu_1 A_1Y + \cdots + \mu_m A_mY$ span the normal space to the manifold at $Y$. Right-multiply~\eqref{eq:gradRandE} with $Y\transpose$ and apply $\calA$ to obtain
\begin{align}
\calA\left(\calA^*(\mu)YY\transpose\right) & = \calA(CYY\transpose).
\label{eq:AstarmuY}
\end{align}
Under the assumption that the $A_iY$'s are linearly independent, $\mu$ is the unique solution to this linear system---for KKT points, these are the Lagrange multipliers.
Furthermore, contrary to classical KKT conditions, $\mu$ is defined for \emph{all} feasible $Y$ (not only for KKT points) and can be found by solving~\eqref{eq:AstarmuY}.\footnote{For the Max-Cut SDP for example, $\calA = \diag$ and $\mu = \diag(CYY\transpose)$.}
This $\mu$ is a well-defined, differentiable function of $Y$.\footnote{Eq.~\eqref{eq:AstarmuY} is equivalent to $G\mu = \calA(CYY\transpose)$, where $G_{ij} = \inner{A_iY}{A_jY}$. For all $Y\in\calM$, $G$
	is invertible since $A_1Y, \ldots, A_mY$ are linearly independent.
	Hence, $\mu = G^{-1} \calA(CYY\transpose)$ is differentiable in $Y$ at $Y\in\calM$.}
Using this definition of $\mu$, let
\begin{align}
	S & = S(Y) = S(YY\transpose) = C - \calA^*(\mu).
\label{eq:S}
\end{align}
First-order critical points then satisfy (using~\eqref{eq:gradRandE}):
\begin{align}
\frac{1}{2}\grad f(Y) = SY = 0.
\label{eq:firstorder}
\end{align}
We note in passing that $\mu(Y)$ is feasible for the dual of~\eqref{eq:SDP} exactly when $S(Y) \succeq 0$:
\begin{align}
d^* = \max_{\mu\in\Rm} b\transpose \mu \textrm{ subject to } C - \calA^*(\mu) \succeq 0,
\tag{DSDP}
\label{eq:DSDP}
\end{align}
which illustrates the importance of $S$ as a dual certificate for~\eqref{eq:SDP}.

Now let us turn to the Hessian of $f$. Equation~\eqref{eq:RiemannianHessian} requires computation of the differential of $\grad f(Y)$, which is
\begin{align*}
\D\big( Y \mapsto \grad f(Y) \big)(Y)[\dot Y] & = \D\big( Y \mapsto 2SY \big)(Y)[\dot Y] = 2S \dot Y + 2\dot SY,
\end{align*}
where $\dot S \triangleq \D S(Y)[\dot Y]$ is a symmetric matrix. Because of eq.~\eqref{eq:S}, $\dot S = \calA^*(\nu)$ for some $\nu \in \Rm$. Hence, for any tangent vector $\dot Z\in\T_Y\calM$~\eqref{eq:TYM}, we have $\innersmall{\dot Z}{\dot S Y} = \innersmall{\dot Z Y\transpose}{\calA^*(\nu)} = \innersmall{\calA(\dot Z Y\transpose)}{\nu} = 0$: $\dot S Y$ is orthogonal to the tangent space at $Y$. Using~\eqref{eq:RiemannianHessian}, we find that
\begin{align}
\frac{1}{2}\Hess f(Y)[\dot Y] = \Proj_Y S\dot Y.
\label{eq:secondorderfull}
\end{align}
The second-order condition for $Y$ is that $\Hess f(Y)$ be positive semidefinite on $\T_Y\calM$. Using that $\Proj_Y$ is a self-adjoint operator, it follows that this condition is equivalent to:
\begin{align}
\forall \dot Y \in \T_Y\calM, \quad \frac{1}{2} \innersmall{\dot Y}{\Hess f(Y)[\dot Y]}
= \innersmall{\dot Y}{S\dot Y} \geq 0.
\label{eq:secondorder}
\end{align}

We now show that \emph{rank-deficient} second-order critical points are globally optimal. We obtain this result as a corollary to a more informative statement about optimality gap at approximately second-order critical points (assuming exact rank deficiency). The lemmas also show how $S$ can be used to control the optimality gap at approximate critical points without requiring rank deficiency.
This is valid for any $p$ and any $C$.
\begin{lemma} \label{lem:optimgap}
	For any $Y$ on the manifold $\calM$, if $\|\grad f(Y)\| \leq \varepsilon_g$ and $S(Y) \succeq -\frac{\varepsilon_H}{2} I_n$, then the optimality gap at $Y$ with respect to~\eqref{eq:SDP} is bounded as
	\begin{align}
	0 \leq 2(f(Y) - f^*) \leq \varepsilon_H R + \varepsilon_g \sqrt{R},
	\label{eq:lemoptimgap}
	\end{align}
	where $R = \max_{X\in\calC} \trace(X) < \infty$ measures the size of the compact set $\calC$~\eqref{eq:C}.
	If $I_n\in\im(\calA^*)$,
	the right hand side of~\eqref{eq:lemoptimgap} simplifies to $\varepsilon_H R$. This holds in particular if all $X\in\calC$ have same trace and $\calC$ has a relative interior point (Slater condition).
\end{lemma}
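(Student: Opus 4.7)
My plan is to combine weak SDP duality with the Lagrange multipliers $\mu(Y)$ already identified in the preceding discussion, using that the hypotheses make $S(Y)$ fall only $\tfrac{\varepsilon_H}{2}I_n$ short of a genuine dual certificate for~\eqref{eq:DSDP}. Concretely, I would fix any optimum $X^*\in\calC$ of~\eqref{eq:SDP}, write $S = S(Y)$ and $\mu = \mu(Y)$, and apply $C = S + \calA^*(\mu)$ together with $\calA(YY\transpose) = b = \calA(X^*)$ to collapse the multiplier contribution:
\[
    f(Y) - f^* = \inner{C}{YY\transpose - X^*} = \inner{S}{YY\transpose} - \inner{S}{X^*}.
\]

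Next I would bound the two remaining terms separately. For $\inner{S}{YY\transpose} = \inner{SY}{Y}$, the identity $\grad f(Y) = 2SY$ deduced from~\eqref{eq:gradRandE}, combined with Cauchy--Schwarz and $\|Y\|^2 = \trace(YY\transpose) \leq R$ (since $YY\transpose\in\calC$), yields $2\inner{SY}{Y} = \inner{\grad f(Y)}{Y} \leq \varepsilon_g\sqrt{R}$. For $-\inner{S}{X^*}$, both $S + \tfrac{\varepsilon_H}{2}I_n$ and $X^*$ are positive semidefinite, so their Frobenius inner product is non-negative, giving $\inner{S}{X^*} \geq -\tfrac{\varepsilon_H}{2}\trace(X^*) \geq -\tfrac{\varepsilon_H}{2}R$, hence $-2\inner{S}{X^*} \leq \varepsilon_H R$. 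Summing these two bounds yields~\eqref{eq:lemoptimgap}, while $f(Y) \geq f^*$ is immediate from feasibility of $YY\transpose$ for~\eqref{eq:SDP}.

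For the simplification under $I_n \in \im(\calA^*)$, my strategy hinges on a single geometric observation: writing $I_n = \calA^*(\lambda) = \sum_i \lambda_i A_i$ gives $Y = \sum_i \lambda_i A_i Y$, exhibiting $Y$ itself as a member of $\spann\{A_iY\}$, which under the regularity assumption is precisely the normal space to $\calM$ at $Y$. Since $\grad f(Y)\in\T_Y\calM$, the inner product $\inner{\grad f(Y)}{Y}$---and therefore $\inner{SY}{Y}$---vanishes \emph{exactly}, erasing the $\varepsilon_g\sqrt{R}$ contribution from the bound. The ``in particular'' clause then reduces to showing that constant trace plus a Slater point forces $I_n \in \im(\calA^*)$: under Slater, the affine hull of $\calC$ is $X_0 + \ker\calA$ for any relative interior $X_0$, and constancy of the trace on $\calC$ forces $\inner{I_n}{H} = 0$ for every $H\in\ker\calA$, placing $I_n$ in $(\ker\calA)^\perp = \im(\calA^*)$.

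The main obstacle I expect is not the duality chain itself---routine once $\mu(Y)$ is in hand---but recognizing the geometric fact that $Y$ lies in the normal space whenever $I_n\in\im(\calA^*)$. That is what makes the gradient term vanish \emph{identically} rather than being merely controlled in magnitude, and it is precisely what produces the cleaner bound under the Slater condition.
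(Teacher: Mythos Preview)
Your proof is correct and follows essentially the same approach as the paper's. The only cosmetic difference is that the paper routes the duality computation through $\inner{\mu(Y)}{b}$ explicitly, whereas you collapse the multiplier term directly via $C = S + \calA^*(\mu)$ and $\calA(YY\transpose - X^*) = 0$; the two are algebraically equivalent, and the remaining steps (Cauchy--Schwarz on $\inner{\grad f(Y)}{Y}$, the normal-space observation for $Y$ when $I_n\in\im(\calA^*)$, and the Slater argument placing $I_n$ in $(\ker\calA)^\perp = \im(\calA^*)$) match the paper's proof.
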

\begin{proof}
	By assumption on $S(Y)$ (eq.~\eqref{eq:S}),
	\begin{align*}
	\forall \tilde X\in\calC, \quad -\frac{\varepsilon_H}{2} \trace(\tilde X) \leq \innersmall{S(Y)}{\tilde X} & = \innersmall{C}{\tilde X} - \innersmall{\calA^*(\mu(Y))}{\tilde X} = \innersmall{C}{\tilde X} - \innersmall{\mu(Y)}{b}.
	\end{align*}
	This holds in particular for $\tilde X$ optimal for~\eqref{eq:SDP}. Thus, we may set $\innersmall{C}{\tilde X} = f^*$; and certainly, $\trace(\tilde X) \leq R$. Furthermore,
	\begin{align*}
	\innersmall{\mu(Y)}{b} = \innersmall{\mu(Y)}{\calA(YY\transpose)} = \innersmall{C-S(Y)}{YY\transpose} = f(Y) - \innersmall{S(Y)Y}{Y}.
	\end{align*}
	Combining the typeset equations and using $\grad f(Y) = 2S(Y)Y$, we find
	\begin{align}
	0 \leq 2(f(Y) - f^*) \leq \varepsilon_H R + \innersmall{\grad f(Y)}{Y}.
	\label{eq:optgappartial}
	\end{align}
	In general, we do not assume $I_n \in \im(\calA^*)$ and we get the result by Cauchy--Schwarz on~\eqref{eq:optgappartial} and $\|Y\| = \sqrt{\trace(YY\transpose)} \leq \sqrt{R}$:
	\begin{align*}
	0 \leq 2(f(Y)-f^*) \leq \varepsilon_H R + \varepsilon_g \sqrt{R}.
	\end{align*}
	But if $I_n \in \im(\calA^*)$, then we show that $Y$ is a normal vector at $Y$, so that it is orthogonal to $\grad f(Y)$. Formally: there exists $\nu \in \Rm$ such that $I_n = \calA^*(\nu)$, and
	\begin{align*}
	\innersmall{\grad f(Y)}{Y} & = \innersmall{\grad f(Y) Y\transpose}{I_n} = \innersmall{\calA(\grad f(Y) Y\transpose)}{\nu} = 0,
	\end{align*}
	since $\grad f(Y) \in \T_Y\calM$~\eqref{eq:TYM}. This indeed allows to simplify~\eqref{eq:optgappartial}.
	
	To conclude, we show that if $\calC$ has a relative interior point $X'$ (that is, $\calA(X') = b$ and $X' \succ 0$) and if $\Trace(X)$ is a constant for all $X$ in $\calC$, then $I_n \in \im(\calA^*)$. Indeed, $\Snn = \im(\calA^*) \oplus \ker\calA$, so there exist $\nu\in\Rm$ and $M \in \ker \calA$ such that $I_n = \calA^*(\nu) + M$. Thus, for all $X$ in $\calC$,
	\begin{align*}
	0 = \trace(X - X') = \inner{\calA^*(\nu)+M}{X-X'} = \inner{M}{X-X'}.
	\end{align*}
	This implies that $M$ is orthogonal to all $X-X'$. These span $\ker\calA$ since $X'$ is interior. Indeed, for any $H\in\ker\calA$, since $X'\succ 0$, there exists $\varepsilon > 0$ such that $X \triangleq X'+\varepsilon H \succeq 0$ and $\calA(X) = b$, so that $X \in \calC$.
	Hence, $M \in \ker\calA$ is orthogonal to $\ker \calA$. Consequently, $M = 0$ and $I_n = \calA^*(\nu)$.
\end{proof}
\begin{lemma}\label{lem:fromHesstoS}
	If $Y\in\calM$ is column rank deficient and $\Hess f(Y) \succeq -\varepsilon_H \Id$, then $S(Y) \succeq -\frac{\varepsilon_H}{2} I_n$.
\end{lemma}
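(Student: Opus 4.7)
The plan is to use the rank deficiency of $Y$ to inject, for each vector $u \in \Rn$, a convenient rank-one tangent vector into the second-order condition~\eqref{eq:secondorder} and thereby transfer the Hessian lower bound into a lower bound on the quadratic form defined by $S$.

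Concretely, since $Y \in \Rnp$ is column rank deficient, I choose a nonzero vector $z \in \Rp$ in the kernel of $Y$, i.e.\ with $Yz = 0$. For an arbitrary $u\in\Rn$, I then consider $\dot Y = uz\transpose \in \Rnp$. The first step is to verify that $\dot Y$ lies in $\T_Y\calM$ (eq.~\eqref{eq:TYM}): since $Yz=0$, one has $\dot Y Y\transpose + Y \dot Y\transpose = u(Yz)\transpose + (Yz)u\transpose = 0$, so $\calA(\dot Y Y\transpose + Y\dot Y\transpose) = 0$ and $\dot Y$ is indeed tangent at $Y$.

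Next I plug this $\dot Y$ into the second-order condition. Using the identity~\eqref{eq:secondorder}, namely $\tfrac{1}{2}\innersmall{\dot Y}{\Hess f(Y)[\dot Y]} = \innersmall{\dot Y}{S\dot Y}$, together with the hypothesis $\Hess f(Y) \succeq -\varepsilon_H \Id$, I obtain
\begin{align*}
2\innersmall{\dot Y}{S\dot Y} \;\geq\; -\varepsilon_H \|\dot Y\|^2.
\end{align*}
A direct trace computation gives $\innersmall{\dot Y}{S\dot Y} = \trace(zu\transpose S u z\transpose) = (u\transpose S u)\|z\|^2$ and $\|\dot Y\|^2 = \|u\|^2 \|z\|^2$. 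Dividing through by the nonzero quantity $\|z\|^2$ yields $u\transpose S u \geq -\tfrac{\varepsilon_H}{2}\|u\|^2$. Since $u$ was arbitrary, this is exactly $S(Y) \succeq -\tfrac{\varepsilon_H}{2} I_n$.

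No part of this is really an obstacle; the only thing to be careful about is confirming that the rank-one perturbation $uz\transpose$ is a legitimate tangent vector, and the identity~\eqref{eq:secondorder} already reduces the Hessian condition to the quadratic form in $S$. The whole argument is a short consequence of the structure laid out just before the lemma statement.
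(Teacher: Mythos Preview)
Your proof is correct and is essentially identical to the paper's own argument: both exploit a unit (or nonzero) $z$ in $\ker Y$ to build the rank-one tangent vector $\dot Y = uz\transpose$ and then read off $u\transpose S u \geq -\tfrac{\varepsilon_H}{2}\|u\|^2$ from~\eqref{eq:secondorder}. The only cosmetic difference is that the paper normalizes $\|z\|=1$ up front, whereas you divide by $\|z\|^2$ at the end.
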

\begin{proof}
	By assumption, there exists $z\in\Rp$, $\|z\| = 1$ such that $Yz = 0$. Thus, for any $x\in\Rn$, we can form $\dot Y = xz\transpose$: it is a tangent vector since $Y\dot Y\transpose = 0$~\eqref{eq:TYM}. Then, condition~\eqref{eq:secondorder} combined with the assumption on $\Hess f(Y)$ tells us
	\begin{align*}
	-\varepsilon_H \|x\|^2 \leq \innersmall{\dot Y}{\Hess f(Y)[\dot Y]} = 2\innersmall{\dot Y}{S\dot Y} = 2\innersmall{xz\transpose z x\transpose}{S} = 2 x\transpose S x.
	\end{align*}
	This holds for all $x\in\Rn$, hence $S \succeq -\frac{\varepsilon_H}{2} I_n$ as required.
\end{proof}
\begin{corollary} \label{cor:rankdefsocpopt}
	If $Y\in\calM_p$ is a column rank-deficient second-order critical point for~\eqref{eq:QCQP}, then it is optimal for~\eqref{eq:QCQP} and $X = YY\transpose$ is optimal for~\eqref{eq:SDP}. In particular, for $p>n$, all second-order critical points are optimal.
\end{corollary}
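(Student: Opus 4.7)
The plan is to derive the corollary as an almost immediate consequence of the two preceding lemmas (Lemma~\ref{lem:optimgap} and Lemma~\ref{lem:fromHesstoS}), by specializing their approximation parameters $\varepsilon_g$ and $\varepsilon_H$ to zero and invoking feasibility of $X = YY\transpose$ for~\eqref{eq:SDP}.

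First, I would unpack the hypothesis. A second-order critical point $Y \in \calM_p$ satisfies $\grad f(Y) = 0$ (so we may take $\varepsilon_g = 0$) and $\Hess f(Y) \succeq 0$ (so we may take $\varepsilon_H = 0$). Combined with the assumption that $Y$ is column rank deficient, Lemma~\ref{lem:fromHesstoS} applied with $\varepsilon_H = 0$ yields $S(Y) \succeq 0$.

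Next, I would feed $\varepsilon_g = 0$ and $\varepsilon_H = 0$ into Lemma~\ref{lem:optimgap}, whose hypotheses are now met (compactness of $\calC$ gives $R < \infty$). The bound~\eqref{eq:lemoptimgap} then degenerates to $0 \leq 2(f(Y) - f^*) \leq 0$, i.e., $f(Y) = f^*$. This already says $Y$ is globally optimal for~\eqref{eq:QCQP}, since $f(Y)$ attains the SDP optimum $f^*$ and $f^* \leq q^*$ (indeed, any feasible $Y'$ for~\eqref{eq:QCQP} yields a feasible $Y'Y'^\top$ for~\eqref{eq:SDP}). To transfer optimality to $X = YY\transpose$, I would simply observe that $X$ is feasible for~\eqref{eq:SDP}---since $\calA(X) = \calA(YY\transpose) = b$ from $Y \in \calM_p$ and $X \succeq 0$ by construction---and $\langle C, X\rangle = \langle CY, Y\rangle = f(Y) = f^*$, so $X$ attains the SDP optimum.

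For the final sentence, I would note that when $p > n$, every $Y \in \Rnp$ has rank at most $\min(n, p) = n < p$, so its columns are necessarily linearly dependent; hence every feasible $Y$ is column rank deficient, and the first part of the corollary applies unconditionally to every second-order critical point. No obstacle is expected: the only thing to be careful about is the trivial check that $X = YY\transpose$ is SDP-feasible, and that $f(Y) = \langle C, YY\transpose\rangle$ so the cost values match. The real content of this corollary lies entirely in the two lemmas it cites.
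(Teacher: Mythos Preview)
Your proposal is correct and matches the paper's intended derivation exactly: the corollary is stated immediately after Lemmas~\ref{lem:optimgap} and~\ref{lem:fromHesstoS} precisely because it follows by specializing $\varepsilon_g = \varepsilon_H = 0$, and your feasibility check for $X = YY\transpose$ and the rank observation for $p > n$ are the only remaining bookkeeping. The paper does not spell out a proof for this corollary, but your argument is the one it implicitly relies on.
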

The first part of this corollary also appears as~\cite[Prop.\,4]{sdplr}, where the statement is made about local optima rather than second-order critical points.

At this point, we can already give a short proof of Theorem~\ref{thm:approxtolerance}.
\begin{proof}[Proof of Theorem~\ref{thm:approxtolerance}]
	Since $\tilde Y \tilde Y\transpose = YY\transpose$, $S(\tilde Y) = S(Y)$; in particular, $f(\tilde Y) = f(Y)$ and $\|\grad f(\tilde Y)\| = \|\grad f(Y)\|$. Since $\tilde Y$ has deficient column rank, apply Lemmas~\ref{lem:optimgap} and~\ref{lem:fromHesstoS}. For $p>n$, there is no need to form $\tilde Y$ as $Y$ necessarily has deficient column rank.
\end{proof}

Based on Corollary~\ref{cor:rankdefsocpopt}, to establish Theorem~\ref{thm:masterthm} it is sufficient to show that, for almost all $C$, all second-order critical points are rank deficient already for small $p$. We show that in fact this is true even for first-order critical points. The argument is by dimensionality counting on $\Snn$: the set of all possible cost matrices $C$.
\begin{lemma} \label{lem:criticalptsrankdeficient}
	Under the assumptions of Theorem~\ref{thm:masterthm}, for almost all $C$, all critical points of~\eqref{eq:QCQP} are rank deficient.
\end{lemma}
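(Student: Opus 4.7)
My plan is to prove Lemma~\ref{lem:criticalptsrankdeficient} by a dimension count over the space $\Snn$ of cost matrices. The crucial starting observation is that if $Y\in\calM$ is a first-order critical point of full column rank $p$, then by~\eqref{eq:firstorder} the symmetric matrix $S = S(Y) = C - \calA^*(\mu(Y))$ satisfies $SY = 0$; the $p$ linearly independent columns of $Y$ therefore lie in $\ker S$, forcing $\rank(S) \leq n-p$. Under assumption~(ii) of Theorem~\ref{thm:masterthm} the Lagrange multiplier $\mu(Y) \in \Rm$ exists and is unique, so this description is exhaustive. Consequently, writing $B \subseteq \Snn$ for the set of cost matrices admitting a first-order critical point of full column rank, we have
\[
B \ \subseteq\ \im \Phi,
\]
where $\Phi \colon \Rm \times \{S \in \Snn : \rank(S) \leq n - p\} \to \Snn$ is the affine-linear map $\Phi(\mu, S) = \calA^*(\mu) + S$. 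This reduction is the heart of the argument: it eliminates $\calM$ altogether and replaces it by a purely linear-algebraic low-rank condition on $S$.

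Next I stratify the source by the rank of $S$. It is a standard fact that, for each $r \in \{0, 1, \ldots, n-p\}$, the set $M_r$ of symmetric $n \times n$ matrices of rank exactly $r$ is a smooth embedded submanifold of $\Snn$ of dimension $\frac{n(n+1)}{2} - \frac{(n-r)(n-r+1)}{2}$. This dimension is maximized at $r = n-p$, where it equals $\frac{n(n+1)}{2} - \frac{p(p+1)}{2}$. Therefore $\Phi$ restricts to a smooth map on each stratum $\Rm \times M_r$ whose domain has dimension at most
\[
m + \frac{n(n+1)}{2} - \frac{p(p+1)}{2},
\]
which, under the hypothesis $\frac{p(p+1)}{2} > m$, is strictly less than $\dim \Snn = \frac{n(n+1)}{2}$.

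The conclusion then follows from the standard consequence of Sard's theorem that a smooth map from a manifold of strictly smaller dimension has image of Lebesgue measure zero. Applying this to each of the finitely many strata $\Rm \times M_r$ ($r = 0, 1, \ldots, n-p$) and taking the union gives that $\im \Phi$, and hence $B$, has measure zero in $\Snn$. So for almost every $C \in \Snn$ no first-order critical point of~\eqref{eq:QCQP} has full column rank, which is the desired statement.

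The conceptual work is entirely in the first step: once the low-rank structure of $S(Y)$ at a full-rank critical point is extracted, the remainder is routine dimension counting. A minor point to be careful about is that $\{S \in \Snn : \rank(S) \leq n-p\}$ is a singular algebraic variety rather than a manifold, but since it decomposes into finitely many smooth strata each satisfying the same dimension bound, this causes no real obstacle and avoids any need to invoke a more elaborate measure-zero statement for real analytic sets.
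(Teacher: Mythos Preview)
Your proof is correct and follows essentially the same route as the paper's: both extract from $S(Y)Y=0$ that a full-rank critical point forces $C = \calA^*(\mu) + S$ with $S$ of nullity at least $p$ (equivalently rank at most $n-p$), then count dimensions to show such $C$ form a measure-zero set when $\frac{p(p+1)}{2} > m$. The only cosmetic differences are that the paper indexes strata by nullity rather than rank and phrases the dimension bound informally rather than via Sard's theorem on smooth strata.
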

\begin{proof}
	Let $Y$ be a critical point for~\eqref{eq:QCQP}. By the first-order condition $S(Y)Y=0$~\eqref{eq:firstorder} and the definition of $S(Y) = C - \calA^*(\mu(Y))$~\eqref{eq:S}, there exists $\mu\in\Rm$ such that
	\begin{align}
	\rank Y \leq \nulll(C-\calA^*(\mu)) \leq \max_{\nu\in\Rm} \nulll(C-\calA^*(\nu)),
	\label{eq:baseinequality}
	\end{align}
	where $\nulll$ denotes the nullity (dimension of the kernel). This first step in the proof is inspired by~\cite[Thm.\,3]{wen2013orthogonality}.
	If the right hand side evaluates to $\ell$, then there exists $\nu$ such that $M = C-\calA^*(\nu)$ and $\nulll(M) = \ell$. Writing $C = M + \calA^*(\nu)$, we find that
	\begin{align}
	C & \in \calN_\ell + \im(\calA^*)
	\end{align}
	where the $+$ is a set-sum and $\calN_\ell$ denotes the set of symmetric matrices of size $n$ with nullity $\ell$. This set has dimension 
	\begin{align}
	\dim \calN_\ell = \frac{n(n+1)}{2} - \frac{\ell(\ell+1)}{2},
	\end{align}
	whereas $\dim \im(\calA^*) = \rank(\calA^*) \leq m$. Assume the right hand side of~\eqref{eq:baseinequality} evaluates to $p$ or more. Then, a fortiori,
	\begin{align}
	C \in \bigcup_{\ell = p,\ldots,n} \calN_\ell + \im(\calA^*).
	\label{eq:CbelongsUnion}
	\end{align}
	The set on the right hand side contains all ``bad'' $C$'s, that is, those for which~\eqref{eq:baseinequality} offers no information about the rank of $Y$. The dimension of that set is bounded as follows, using that the dimension of a finite union is at most the maximal dimension, and the dimension of a finite sum of sets is at most the sum of the set dimensions:
	\begin{align*}
	\dim\left( \bigcup_{\ell = p,\ldots,n} \calN_\ell + \im(\calA^*) \right) \leq \dim\left( \calN_p + \im(\calA^*) \right) \leq \frac{n(n+1)}{2} - \frac{p(p+1)}{2} + m.
	\end{align*}
	Since $C\in\Snn$ lives in a space of dimension $\frac{n(n+1)}{2}$, almost no $C$ verifies~\eqref{eq:CbelongsUnion} if
	\begin{align*}
	\frac{n(n+1)}{2} - \frac{p(p+1)}{2} + m < \frac{n(n+1)}{2}.
	\end{align*}
	Hence, if $\frac{p(p+1)}{2} > m$, then, for almost all $C$, critical points verify $\rank(Y) < p$.
\end{proof}
Theorem~\ref{thm:masterthm} follows as a corollary of Corollary~\ref{cor:rankdefsocpopt} and Lemma~\ref{lem:criticalptsrankdeficient}.

\section{Numerical experiments}\label{sec:XP}

As an example, we run five different solvers on~\eqref{eq:maxcutsdp} with a collection of graphs used in~\citep{sdplr,burer2005local} known as the Gset.\footnote{Downloaded from: \url{http://web.stanford.edu/~yyye/yyye/Gset/} on June 6, 2016.} The solvers are as follows, all run in Matlab. The first three are based on a low-rank factorization while the last two are interior point methods (IPM).
\begin{itemize}
	\item[] \texttt{Manopt} runs the Riemannian Trust-Region method on~\eqref{eq:maxcutBM}, via the Manopt toolbox~\citep{manopt}, with $p = \left\lceil \frac{\sqrt{8n+1}}{2} \right\rceil$ and random initialization. The number of inner iterations allowed to solve the trust-region subproblem is 500. The solver returns when $\frac12 \|\grad f(Y)\| = \|SY\|\leq 10^{-6}$. Code is in Matlab.
	\item[] \texttt{Manopt+} runs the same algorithm as above, but with $p$ increasing from $2$ to $\left\lceil \frac{\sqrt{8n+1}}{2} \right\rceil$ in 5 steps. The point $Y$ computed at a lower $p$ is appended with columns of i.i.d.\ random Gaussian variables with standard deviation $10^{-5}$ and mean 0, then rows are normalized to produce $Y_+$: the initial point for the next value of $p$. The randomization allows to escape near-saddle points (in practice). Code is in Matlab.
	\item[] \texttt{SDPLR} runs the original Burer--Monteiro algorithm implemented by its authors~\citep{sdplr}. Code is in C interfaced through C-mex.
	\item[] \texttt{HRVW} runs an IPM whose implementation is tailored to~\eqref{eq:maxcutsdp}, implemented by its authors~\citep{helmberg1996ipmsdp}. Code is in Matlab.
	\item[] \texttt{CVX} runs SDPT3~\citep{sdpt3} on~\eqref{eq:maxcutsdp} via CVX~\citep{cvx}. Code is in C interfaced through C-mex.
\end{itemize}
After the solvers return, we project their answers to the feasible set. Manopt and SDPLR return a matrix $Y$: it is sufficient to normalize each row to ensure $X = YY\transpose$ is feasible for~\eqref{eq:maxcutsdp} (for Manopt, this step is not necessary). HRVW and CVX return a symmetric matrix $X$. We compute its Cholesky factorization $X = RR\transpose$---if $X$ is not positive semidefinite, we first project using an eigenvalue decomposition. Then, each row of $R$ is normalized so that $X = RR\transpose$ is feasible for~\eqref{eq:maxcutsdp}. Computation time required for these projections is not included in the timings.

We report three metrics for each graph and each solver.
\begin{itemize}
	\item[] Cut bound: a bound on the maximal cut value (lower is better). If $C$ is the adjacency matrix of the graph and $D$ is the degree matrix, then $L = D-C$ is the Laplacian and $\max_{X} \frac{1}{4}\inner{L}{X}$ s.t.\ $\diag(X) = \1, X \succeq 0$ is a bound on the maximal cut. Using Lemma~\ref{lem:optimgap} applied to~\eqref{eq:maxcutsdp}, a candidate optimizer $X$ yields a bound $\frac{1}{4}\inner{L}{X} - \frac{n}{4}\lambdamin(S)$.
	\item[] $\lambdamin(S)$: by Lemma~\ref{lem:optimgap}, this is a measure of optimality for $X$ (feasible), where $S = C - \diag(\diag(CX))$. It is nonpositive and must be as close to 0 as possible. We compute it using bisection and the Cholesky factorization to ensure accuracy.
	\item[] Time: computation time in seconds for the solver to run\footnote{Matlab R2015a on $2 \times 6$ cores processors with hyperthreading, Intel(R) Xeon(R) CPU E5-2640 @ 2.50GHz, 256Gb RAM, Springdale Linux 6.} (this excludes time taken to project the solution to the feasible set and to compute the reported metrics.)
\end{itemize}

Based on the results reported in Table~\ref{tab:maxcutXPrudy}, we make the following main observations: (i) the Manopt approach (optimization on manifolds, also advocated in~\citep{journee2010low}) consistently reaches high accuracy solutions, being often orders of magnitude more accurate than other methods, as judged from $\lambdamin(S)$; (ii) incremental rank solvers (Manopt+ and SDPLR) are often the fastest solvers for large instances; and (iii) the tailored IPM HRVW is faster and typically more accurate than the IPM called by CVX (which is generic software). The latter point hints that one must be careful in dismissing IPMs based on experiments using generic software, although it remains clear from Table~\ref{tab:maxcutXPrudy} that IPMs scale poorly compared to the low-rank factorization methods tested here. In particular, CVX runs into memory trouble for the larger problem instances reported.\footnote{On Graph 77, running CVX leads to Matlab error ``Number of elements exceeds maximum flint $2^{53}-1$.''} To save time, we did not run CVX on the largest graphs.

\section{Numerical experiments: results}

\begin{longtable}{lllllll}
	
	\caption{Results of the experiments described in Section~\ref{sec:XP}.}
	\label{tab:maxcutXPrudy} \\
	\ttfamily Graph & \ttfamily Metric & \ttfamily Manopt & \ttfamily Manopt+ & \ttfamily SDPLR & \ttfamily HRVW & \ttfamily CVX \\ \hline
	\endfirsthead
	
	\ttfamily Graph & \ttfamily Metric & \ttfamily Manopt & \ttfamily Manopt+ & \ttfamily SDPLR & \ttfamily HRVW & \ttfamily CVX \\ \hline
	
	\endhead
	
	Graph 1 & Cut bound & 12083.2 & 12083.2 & 12083.2 & 12083.2 & 12083.2 \\\nopagebreak
\quad 800 nodes & $\lambdamin(S)$ & $-3 \cdot 10^{-11}$ & $-2 \cdot 10^{-11}$ & $-9 \cdot 10^{-6}$ & $-2 \cdot 10^{-5}$ & $-3 \cdot 10^{-6}$ \\\nopagebreak
\quad 19176 edges & Time [s] & 2.1 & 3.2 & 6.6 & 1.9 & 35.0 \\
\hline

Graph 2 & Cut bound & 12089.4 & 12089.4 & 12089.4 & 12089.4 & 12089.4 \\\nopagebreak
\quad 800 nodes & $\lambdamin(S)$ & $-2 \cdot 10^{-10}$ & $-8 \cdot 10^{-12}$ & $-5 \cdot 10^{-6}$ & $-3 \cdot 10^{-5}$ & $-7 \cdot 10^{-7}$ \\\nopagebreak
\quad 19176 edges & Time [s] & 1.6 & 3.1 & 7.8 & 2.0 & 33.7 \\
\hline

Graph 3 & Cut bound & 12084.3 & 12084.3 & 12085.5 & 12084.3 & 12084.3 \\\nopagebreak
\quad 800 nodes & $\lambdamin(S)$ & $-3 \cdot 10^{-11}$ & $-1 \cdot 10^{-11}$ & $-6 \cdot 10^{-3}$ & $-4 \cdot 10^{-5}$ & $-2 \cdot 10^{-6}$ \\\nopagebreak
\quad 19176 edges & Time [s] & 2.1 & 4.5 & 9.8 & 2.0 & 34.0 \\
\hline

Graph 4 & Cut bound & 12111.5 & 12111.5 & 12111.5 & 12111.5 & 12111.5 \\\nopagebreak
\quad 800 nodes & $\lambdamin(S)$ & $-2 \cdot 10^{-11}$ & $-2 \cdot 10^{-10}$ & $-1 \cdot 10^{-5}$ & $-3 \cdot 10^{-5}$ & $-6 \cdot 10^{-6}$ \\\nopagebreak
\quad 19176 edges & Time [s] & 1.8 & 3.2 & 10.6 & 2.2 & 33.7 \\
\hline

Graph 5 & Cut bound & 12099.9 & 12099.9 & 12099.9 & 12099.9 & 12099.9 \\\nopagebreak
\quad 800 nodes & $\lambdamin(S)$ & $-3 \cdot 10^{-12}$ & $-8 \cdot 10^{-12}$ & $-1 \cdot 10^{-5}$ & $-3 \cdot 10^{-5}$ & $-1 \cdot 10^{-6}$ \\\nopagebreak
\quad 19176 edges & Time [s] & 1.5 & 2.5 & 6.7 & 2.2 & 33.7 \\
\hline

Graph 6 & Cut bound & 2656.2 & 2656.2 & 2660.8 & 2656.2 & 2656.2 \\\nopagebreak
\quad 800 nodes & $\lambdamin(S)$ & $-4 \cdot 10^{-12}$ & $-8 \cdot 10^{-12}$ & $-2 \cdot 10^{-2}$ & $-7 \cdot 10^{-6}$ & $-9 \cdot 10^{-6}$ \\\nopagebreak
\quad 19176 edges & Time [s] & 1.4 & 2.6 & 5.5 & 2.4 & 34.1 \\
\hline

Graph 7 & Cut bound & 2489.3 & 2489.3 & 2489.3 & 2489.3 & 2489.3 \\\nopagebreak
\quad 800 nodes & $\lambdamin(S)$ & $-2 \cdot 10^{-11}$ & $-2 \cdot 10^{-11}$ & $-1 \cdot 10^{-5}$ & $-9 \cdot 10^{-6}$ & $-4 \cdot 10^{-7}$ \\\nopagebreak
\quad 19176 edges & Time [s] & 6.4 & 2.6 & 5.9 & 2.0 & 35.7 \\
\hline

Graph 8 & Cut bound & 2506.9 & 2506.9 & 2506.9 & 2506.9 & 2506.9 \\\nopagebreak
\quad 800 nodes & $\lambdamin(S)$ & $-5 \cdot 10^{-12}$ & $-9 \cdot 10^{-12}$ & $-4 \cdot 10^{-5}$ & $-1 \cdot 10^{-5}$ & $-1 \cdot 10^{-6}$ \\\nopagebreak
\quad 19176 edges & Time [s] & 1.2 & 1.8 & 10.6 & 2.2 & 34.0 \\
\hline

Graph 9 & Cut bound & 2528.7 & 2528.7 & 2528.7 & 2528.7 & 2528.7 \\\nopagebreak
\quad 800 nodes & $\lambdamin(S)$ & $-1 \cdot 10^{-9}$ & $-8 \cdot 10^{-12}$ & $-8 \cdot 10^{-6}$ & $-1 \cdot 10^{-5}$ & $-1 \cdot 10^{-6}$ \\\nopagebreak
\quad 19176 edges & Time [s] & 0.9 & 1.8 & 5.7 & 2.4 & 34.8 \\
\hline

Graph 10 & Cut bound & 2485.1 & 2485.1 & 2485.1 & 2485.1 & 2485.1 \\\nopagebreak
\quad 800 nodes & $\lambdamin(S)$ & $-5 \cdot 10^{-11}$ & $-8 \cdot 10^{-12}$ & $-6 \cdot 10^{-6}$ & $-8 \cdot 10^{-6}$ & $-2 \cdot 10^{-6}$ \\\nopagebreak
\quad 19176 edges & Time [s] & 1.2 & 1.6 & 5.3 & 2.1 & 33.9 \\
\hline

Graph 11 & Cut bound & 629.2 & 629.2 & 629.2 & 629.2 & 629.2 \\\nopagebreak
\quad 800 nodes & $\lambdamin(S)$ & $-3 \cdot 10^{-9}$ & $-7 \cdot 10^{-12}$ & $-5 \cdot 10^{-6}$ & $-1 \cdot 10^{-6}$ & $-4 \cdot 10^{-8}$ \\\nopagebreak
\quad 1600 edges & Time [s] & 13.6 & 13.6 & 3.9 & 2.0 & 31.5 \\
\hline

Graph 12 & Cut bound & 623.9 & 623.9 & 623.9 & 623.9 & 623.9 \\\nopagebreak
\quad 800 nodes & $\lambdamin(S)$ & $-1 \cdot 10^{-10}$ & $-4 \cdot 10^{-12}$ & $-3 \cdot 10^{-6}$ & $-3 \cdot 10^{-6}$ & $-9 \cdot 10^{-8}$ \\\nopagebreak
\quad 1600 edges & Time [s] & 8.8 & 7.3 & 1.9 & 2.0 & 31.7 \\
\hline

Graph 13 & Cut bound & 647.1 & 647.1 & 647.1 & 647.1 & 647.1 \\\nopagebreak
\quad 800 nodes & $\lambdamin(S)$ & $-1 \cdot 10^{-9}$ & $-2 \cdot 10^{-12}$ & $-2 \cdot 10^{-6}$ & $-2 \cdot 10^{-6}$ & $-1 \cdot 10^{-7}$ \\\nopagebreak
\quad 1600 edges & Time [s] & 6.9 & 6.7 & 1.3 & 2.2 & 31.4 \\
\hline

Graph 14 & Cut bound & 3191.6 & 3191.6 & 3191.6 & 3191.6 & 3191.6 \\\nopagebreak
\quad 800 nodes & $\lambdamin(S)$ & $-1 \cdot 10^{-10}$ & $-3 \cdot 10^{-12}$ & $-3 \cdot 10^{-5}$ & $-3 \cdot 10^{-5}$ & $-1 \cdot 10^{-6}$ \\\nopagebreak
\quad 4694 edges & Time [s] & 1.5 & 5.3 & 4.4 & 2.5 & 34.1 \\
\hline

Graph 15 & Cut bound & 3171.6 & 3171.6 & 3171.6 & 3171.6 & 3171.6 \\\nopagebreak
\quad 800 nodes & $\lambdamin(S)$ & $-1 \cdot 10^{-10}$ & $-5 \cdot 10^{-12}$ & $-6 \cdot 10^{-6}$ & $-5 \cdot 10^{-6}$ & $-3 \cdot 10^{-7}$ \\\nopagebreak
\quad 4661 edges & Time [s] & 3.4 & 6.5 & 5.4 & 3.2 & 34.6 \\
\hline

Graph 16 & Cut bound & 3175.0 & 3175.0 & 3175.1 & 3175.0 & 3175.0 \\\nopagebreak
\quad 800 nodes & $\lambdamin(S)$ & $-9 \cdot 10^{-12}$ & $-2 \cdot 10^{-12}$ & $-6 \cdot 10^{-4}$ & $-1 \cdot 10^{-5}$ & $-6 \cdot 10^{-7}$ \\\nopagebreak
\quad 4672 edges & Time [s] & 6.6 & 6.2 & 3.8 & 3.1 & 34.8 \\
\hline

Graph 17 & Cut bound & 3171.3 & 3171.3 & 3171.5 & 3171.3 & 3171.3 \\\nopagebreak
\quad 800 nodes & $\lambdamin(S)$ & $-5 \cdot 10^{-12}$ & $-2 \cdot 10^{-12}$ & $-1 \cdot 10^{-3}$ & $-1 \cdot 10^{-5}$ & $-1 \cdot 10^{-7}$ \\\nopagebreak
\quad 4667 edges & Time [s] & 6.1 & 6.3 & 3.5 & 2.9 & 34.5 \\
\hline

Graph 18 & Cut bound & 1166.0 & 1166.0 & 1166.0 & 1166.0 & 1166.0 \\\nopagebreak
\quad 800 nodes & $\lambdamin(S)$ & $-4 \cdot 10^{-12}$ & $-3 \cdot 10^{-12}$ & $-3 \cdot 10^{-6}$ & $-4 \cdot 10^{-6}$ & $-1 \cdot 10^{-6}$ \\\nopagebreak
\quad 4694 edges & Time [s] & 1.8 & 2.9 & 4.2 & 3.2 & 35.1 \\
\hline

Graph 19 & Cut bound & 1082.0 & 1082.0 & 1082.0 & 1082.0 & 1082.0 \\\nopagebreak
\quad 800 nodes & $\lambdamin(S)$ & $-4 \cdot 10^{-10}$ & $-4 \cdot 10^{-12}$ & $-4 \cdot 10^{-6}$ & $-3 \cdot 10^{-6}$ & $-8 \cdot 10^{-7}$ \\\nopagebreak
\quad 4661 edges & Time [s] & 1.9 & 2.8 & 4.3 & 3.4 & 34.5 \\
\hline

Graph 20 & Cut bound & 1111.4 & 1111.4 & 1112.1 & 1111.4 & 1111.4 \\\nopagebreak
\quad 800 nodes & $\lambdamin(S)$ & $-2 \cdot 10^{-12}$ & $-3 \cdot 10^{-12}$ & $-3 \cdot 10^{-3}$ & $-4 \cdot 10^{-6}$ & $-2 \cdot 10^{-6}$ \\\nopagebreak
\quad 4672 edges & Time [s] & 2.8 & 3.7 & 2.9 & 3.6 & 34.1 \\
\hline

Graph 21 & Cut bound & 1104.3 & 1104.3 & 1104.3 & 1104.3 & 1104.3 \\\nopagebreak
\quad 800 nodes & $\lambdamin(S)$ & $-2 \cdot 10^{-11}$ & $-6 \cdot 10^{-12}$ & $-4 \cdot 10^{-6}$ & $-2 \cdot 10^{-6}$ & $-6 \cdot 10^{-6}$ \\\nopagebreak
\quad 4667 edges & Time [s] & 2.7 & 4.3 & 3.5 & 3.7 & 34.1 \\
\hline

Graph 22 & Cut bound & 14135.9 & 14135.9 & 14136.0 & 14135.9 & 14137.2 \\\nopagebreak
\quad 2000 nodes & $\lambdamin(S)$ & $-8 \cdot 10^{-12}$ & $-8 \cdot 10^{-12}$ & $-3 \cdot 10^{-5}$ & $-3 \cdot 10^{-5}$ & $-2 \cdot 10^{-3}$ \\\nopagebreak
\quad 19990 edges & Time [s] & 5.5 & 4.9 & 22.5 & 25.7 & 177.7 \\
\hline

Graph 23 & Cut bound & 14142.1 & 14142.1 & 14142.1 & 14142.1 & 14143.5 \\\nopagebreak
\quad 2000 nodes & $\lambdamin(S)$ & $-2 \cdot 10^{-11}$ & $-3 \cdot 10^{-11}$ & $-8 \cdot 10^{-6}$ & $-3 \cdot 10^{-5}$ & $-3 \cdot 10^{-3}$ \\\nopagebreak
\quad 19990 edges & Time [s] & 7.0 & 9.1 & 16.3 & 23.8 & 182.8 \\
\hline

Graph 24 & Cut bound & 14140.9 & 14140.9 & 14140.9 & 14140.9 & 14142.1 \\\nopagebreak
\quad 2000 nodes & $\lambdamin(S)$ & $-1 \cdot 10^{-11}$ & $-7 \cdot 10^{-12}$ & $-1 \cdot 10^{-5}$ & $-2 \cdot 10^{-5}$ & $-2 \cdot 10^{-3}$ \\\nopagebreak
\quad 19990 edges & Time [s] & 4.5 & 5.7 & 24.3 & 24.8 & 173.3 \\
\hline

Graph 25 & Cut bound & 14144.2 & 14144.2 & 14148.8 & 14144.2 & 14145.8 \\\nopagebreak
\quad 2000 nodes & $\lambdamin(S)$ & $-1 \cdot 10^{-9}$ & $-9 \cdot 10^{-12}$ & $-9 \cdot 10^{-3}$ & $-9 \cdot 10^{-6}$ & $-3 \cdot 10^{-3}$ \\\nopagebreak
\quad 19990 edges & Time [s] & 4.8 & 18.1 & 16.7 & 23.8 & 175.0 \\
\hline

Graph 26 & Cut bound & 14132.9 & 14132.9 & 14132.9 & 14132.9 & 14134.2 \\\nopagebreak
\quad 2000 nodes & $\lambdamin(S)$ & $-7 \cdot 10^{-12}$ & $-1 \cdot 10^{-11}$ & $-4 \cdot 10^{-6}$ & $-2 \cdot 10^{-5}$ & $-3 \cdot 10^{-3}$ \\\nopagebreak
\quad 19990 edges & Time [s] & 6.8 & 6.5 & 14.4 & 23.1 & 177.6 \\
\hline

Graph 27 & Cut bound & 4141.7 & 4141.7 & 4145.0 & 4141.7 & 4143.1 \\\nopagebreak
\quad 2000 nodes & $\lambdamin(S)$ & $-1 \cdot 10^{-11}$ & $-7 \cdot 10^{-12}$ & $-7 \cdot 10^{-3}$ & $-9 \cdot 10^{-6}$ & $-3 \cdot 10^{-3}$ \\\nopagebreak
\quad 19990 edges & Time [s] & 3.7 & 4.4 & 10.8 & 23.5 & 175.9 \\
\hline

Graph 28 & Cut bound & 4100.8 & 4100.8 & 4100.8 & 4100.8 & 4102.2 \\\nopagebreak
\quad 2000 nodes & $\lambdamin(S)$ & $-2 \cdot 10^{-9}$ & $-6 \cdot 10^{-12}$ & $-3 \cdot 10^{-5}$ & $-7 \cdot 10^{-6}$ & $-3 \cdot 10^{-3}$ \\\nopagebreak
\quad 19990 edges & Time [s] & 3.0 & 8.0 & 19.6 & 26.5 & 176.8 \\
\hline

Graph 29 & Cut bound & 4208.9 & 4208.9 & 4208.9 & 4208.9 & 4210.0 \\\nopagebreak
\quad 2000 nodes & $\lambdamin(S)$ & $-2 \cdot 10^{-11}$ & $-2 \cdot 10^{-11}$ & $-5 \cdot 10^{-6}$ & $-2 \cdot 10^{-6}$ & $-2 \cdot 10^{-3}$ \\\nopagebreak
\quad 19990 edges & Time [s] & 12.2 & 8.3 & 17.7 & 24.5 & 180.6 \\
\hline

Graph 30 & Cut bound & 4215.4 & 4215.4 & 4215.4 & 4215.4 & 4216.6 \\\nopagebreak
\quad 2000 nodes & $\lambdamin(S)$ & $-7 \cdot 10^{-11}$ & $-6 \cdot 10^{-12}$ & $-5 \cdot 10^{-6}$ & $-6 \cdot 10^{-6}$ & $-2 \cdot 10^{-3}$ \\\nopagebreak
\quad 19990 edges & Time [s] & 19.8 & 10.5 & 11.6 & 25.2 & 176.7 \\
\hline

Graph 31 & Cut bound & 4116.7 & 4116.7 & 4119.1 & 4116.7 & 4118.0 \\\nopagebreak
\quad 2000 nodes & $\lambdamin(S)$ & $-2 \cdot 10^{-11}$ & $-5 \cdot 10^{-12}$ & $-5 \cdot 10^{-3}$ & $-7 \cdot 10^{-6}$ & $-3 \cdot 10^{-3}$ \\\nopagebreak
\quad 19990 edges & Time [s] & 4.1 & 8.9 & 16.2 & 26.2 & 170.6 \\
\hline

Graph 32 & Cut bound & 1567.6 & 1567.6 & 1567.6 & 1567.6 & 1567.8 \\\nopagebreak
\quad 2000 nodes & $\lambdamin(S)$ & $-2 \cdot 10^{-10}$ & $-8 \cdot 10^{-12}$ & $-1 \cdot 10^{-6}$ & $-1 \cdot 10^{-6}$ & $-3 \cdot 10^{-4}$ \\\nopagebreak
\quad 4000 edges & Time [s] & 45.6 & 25.4 & 13.9 & 21.7 & 142.6 \\
\hline

Graph 33 & Cut bound & 1544.3 & 1544.3 & 1544.3 & 1544.3 & 1544.4 \\\nopagebreak
\quad 2000 nodes & $\lambdamin(S)$ & $-7 \cdot 10^{-10}$ & $-5 \cdot 10^{-12}$ & $-1 \cdot 10^{-6}$ & $-9 \cdot 10^{-7}$ & $-1 \cdot 10^{-4}$ \\\nopagebreak
\quad 4000 edges & Time [s] & 31.2 & 17.3 & 9.9 & 23.0 & 141.2 \\
\hline

Graph 34 & Cut bound & 1546.7 & 1546.7 & 1546.7 & 1546.7 & 1546.8 \\\nopagebreak
\quad 2000 nodes & $\lambdamin(S)$ & $-1 \cdot 10^{-9}$ & $-5 \cdot 10^{-12}$ & $-2 \cdot 10^{-6}$ & $-1 \cdot 10^{-6}$ & $-2 \cdot 10^{-4}$ \\\nopagebreak
\quad 4000 edges & Time [s] & 31.3 & 22.0 & 7.7 & 23.6 & 143.9 \\
\hline

Graph 35 & Cut bound & 8014.7 & 8014.7 & 8014.7 & 8014.7 & 8015.3 \\\nopagebreak
\quad 2000 nodes & $\lambdamin(S)$ & $-1 \cdot 10^{-9}$ & $-4 \cdot 10^{-11}$ & $-5 \cdot 10^{-6}$ & $-9 \cdot 10^{-6}$ & $-1 \cdot 10^{-3}$ \\\nopagebreak
\quad 11778 edges & Time [s] & 19.4 & 17.4 & 26.0 & 34.5 & 187.7 \\
\hline

Graph 36 & Cut bound & 8006.0 & 8006.0 & 8006.0 & 8006.0 & 8006.6 \\\nopagebreak
\quad 2000 nodes & $\lambdamin(S)$ & $-9 \cdot 10^{-10}$ & $-3 \cdot 10^{-11}$ & $-1 \cdot 10^{-5}$ & $-2 \cdot 10^{-5}$ & $-1 \cdot 10^{-3}$ \\\nopagebreak
\quad 11766 edges & Time [s] & 12.0 & 36.9 & 41.1 & 37.0 & 193.3 \\
\hline

Graph 37 & Cut bound & 8018.6 & 8018.6 & 8019.4 & 8018.6 & 8019.5 \\\nopagebreak
\quad 2000 nodes & $\lambdamin(S)$ & $-2 \cdot 10^{-10}$ & $-1 \cdot 10^{-11}$ & $-1 \cdot 10^{-3}$ & $-1 \cdot 10^{-5}$ & $-2 \cdot 10^{-3}$ \\\nopagebreak
\quad 11785 edges & Time [s] & 11.2 & 15.4 & 38.4 & 35.2 & 191.1 \\
\hline

Graph 38 & Cut bound & 8015.0 & 8015.0 & 8015.0 & 8015.0 & 8015.5 \\\nopagebreak
\quad 2000 nodes & $\lambdamin(S)$ & $-1 \cdot 10^{-10}$ & $-1 \cdot 10^{-11}$ & $-2 \cdot 10^{-5}$ & $-1 \cdot 10^{-5}$ & $-1 \cdot 10^{-3}$ \\\nopagebreak
\quad 11779 edges & Time [s] & 13.1 & 14.2 & 44.7 & 37.5 & 193.0 \\
\hline

Graph 39 & Cut bound & 2877.6 & 2877.6 & 2877.8 & 2877.6 & 2878.4 \\\nopagebreak
\quad 2000 nodes & $\lambdamin(S)$ & $-4 \cdot 10^{-9}$ & $-7 \cdot 10^{-12}$ & $-3 \cdot 10^{-4}$ & $-4 \cdot 10^{-6}$ & $-2 \cdot 10^{-3}$ \\\nopagebreak
\quad 11778 edges & Time [s] & 16.9 & 12.2 & 31.9 & 39.3 & 195.8 \\
\hline

Graph 40 & Cut bound & 2864.8 & 2864.8 & 2866.2 & 2864.8 & 2865.6 \\\nopagebreak
\quad 2000 nodes & $\lambdamin(S)$ & $-1 \cdot 10^{-11}$ & $-2 \cdot 10^{-11}$ & $-3 \cdot 10^{-3}$ & $-3 \cdot 10^{-6}$ & $-2 \cdot 10^{-3}$ \\\nopagebreak
\quad 11766 edges & Time [s] & 9.2 & 9.4 & 40.8 & 40.9 & 189.0 \\
\hline

Graph 41 & Cut bound & 2865.2 & 2865.2 & 2868.1 & 2865.2 & 2865.8 \\\nopagebreak
\quad 2000 nodes & $\lambdamin(S)$ & $-4 \cdot 10^{-10}$ & $-1 \cdot 10^{-11}$ & $-6 \cdot 10^{-3}$ & $-4 \cdot 10^{-6}$ & $-1 \cdot 10^{-3}$ \\\nopagebreak
\quad 11785 edges & Time [s] & 5.3 & 8.6 & 30.8 & 40.9 & 189.8 \\
\hline

Graph 42 & Cut bound & 2946.3 & 2946.3 & 2948.3 & 2946.3 & 2947.0 \\\nopagebreak
\quad 2000 nodes & $\lambdamin(S)$ & $-9 \cdot 10^{-12}$ & $-7 \cdot 10^{-12}$ & $-4 \cdot 10^{-3}$ & $-6 \cdot 10^{-6}$ & $-1 \cdot 10^{-3}$ \\\nopagebreak
\quad 11779 edges & Time [s] & 7.9 & 8.1 & 32.9 & 41.8 & 188.4 \\
\hline

Graph 43 & Cut bound & 7032.2 & 7032.2 & 7032.2 & 7032.2 & 7033.2 \\\nopagebreak
\quad 1000 nodes & $\lambdamin(S)$ & $-3 \cdot 10^{-12}$ & $-4 \cdot 10^{-12}$ & $-6 \cdot 10^{-6}$ & $-2 \cdot 10^{-5}$ & $-4 \cdot 10^{-3}$ \\\nopagebreak
\quad 9990 edges & Time [s] & 1.9 & 2.3 & 3.6 & 3.8 & 36.4 \\
\hline

Graph 44 & Cut bound & 7027.9 & 7027.9 & 7029.2 & 7027.9 & 7029.4 \\\nopagebreak
\quad 1000 nodes & $\lambdamin(S)$ & $-1 \cdot 10^{-8}$ & $-3 \cdot 10^{-12}$ & $-5 \cdot 10^{-3}$ & $-2 \cdot 10^{-5}$ & $-6 \cdot 10^{-3}$ \\\nopagebreak
\quad 9990 edges & Time [s] & 2.9 & 3.9 & 3.7 & 3.6 & 38.0 \\
\hline

Graph 45 & Cut bound & 7024.8 & 7024.8 & 7024.8 & 7024.8 & 7025.9 \\\nopagebreak
\quad 1000 nodes & $\lambdamin(S)$ & $-1 \cdot 10^{-9}$ & $-5 \cdot 10^{-12}$ & $-2 \cdot 10^{-5}$ & $-8 \cdot 10^{-6}$ & $-5 \cdot 10^{-3}$ \\\nopagebreak
\quad 9990 edges & Time [s] & 1.3 & 6.1 & 4.9 & 3.5 & 37.4 \\
\hline

Graph 46 & Cut bound & 7029.9 & 7029.9 & 7029.9 & 7029.9 & 7030.8 \\\nopagebreak
\quad 1000 nodes & $\lambdamin(S)$ & $-2 \cdot 10^{-10}$ & $-3 \cdot 10^{-12}$ & $-2 \cdot 10^{-5}$ & $-1 \cdot 10^{-5}$ & $-4 \cdot 10^{-3}$ \\\nopagebreak
\quad 9990 edges & Time [s] & 12.9 & 2.3 & 3.1 & 3.7 & 38.3 \\
\hline

Graph 47 & Cut bound & 7036.7 & 7036.7 & 7036.7 & 7036.7 & 7037.8 \\\nopagebreak
\quad 1000 nodes & $\lambdamin(S)$ & $-8 \cdot 10^{-10}$ & $-9 \cdot 10^{-12}$ & $-1 \cdot 10^{-5}$ & $-1 \cdot 10^{-5}$ & $-5 \cdot 10^{-3}$ \\\nopagebreak
\quad 9990 edges & Time [s] & 10.4 & 4.1 & 8.2 & 3.8 & 39.2 \\
\hline

Graph 48 & Cut bound & 6000.0 & 6000.0 & 6000.0 & 6000.0 & 6000.0 \\\nopagebreak
\quad 3000 nodes & $\lambdamin(S)$ & $4 \cdot 10^{-16}$ & $3 \cdot 10^{-16}$ & $-6 \cdot 10^{-10}$ & $-3 \cdot 10^{-6}$ & $5 \cdot 10^{-18}$ \\\nopagebreak
\quad 6000 edges & Time [s] & 2.8 & 4.3 & 3.5 & 47.7 & 307.3 \\
\hline

Graph 49 & Cut bound & 6000.0 & 6000.0 & 6000.0 & 6000.0 & 6000.0 \\\nopagebreak
\quad 3000 nodes & $\lambdamin(S)$ & $4 \cdot 10^{-16}$ & $4 \cdot 10^{-16}$ & $-1 \cdot 10^{-9}$ & $-3 \cdot 10^{-6}$ & $-4 \cdot 10^{-16}$ \\\nopagebreak
\quad 6000 edges & Time [s] & 3.9 & 5.1 & 4.9 & 46.1 & 299.7 \\
\hline

Graph 50 & Cut bound & 5988.2 & 5988.2 & 5988.2 & 5988.2 & 5988.2 \\\nopagebreak
\quad 3000 nodes & $\lambdamin(S)$ & $-2 \cdot 10^{-12}$ & $-1 \cdot 10^{-14}$ & $-1 \cdot 10^{-7}$ & $-3 \cdot 10^{-6}$ & $2 \cdot 10^{-16}$ \\\nopagebreak
\quad 6000 edges & Time [s] & 6.0 & 5.0 & 5.4 & 45.7 & 318.4 \\
\hline

Graph 51 & Cut bound & 4006.3 & 4006.3 & 4006.3 & 4006.3 & 4006.9 \\\nopagebreak
\quad 1000 nodes & $\lambdamin(S)$ & $-2 \cdot 10^{-9}$ & $-4 \cdot 10^{-12}$ & $-8 \cdot 10^{-6}$ & $-1 \cdot 10^{-5}$ & $-3 \cdot 10^{-3}$ \\\nopagebreak
\quad 5909 edges & Time [s] & 5.8 & 7.8 & 10.7 & 5.4 & 41.4 \\
\hline

Graph 52 & Cut bound & 4009.6 & 4009.6 & 4010.0 & 4009.6 & 4010.2 \\\nopagebreak
\quad 1000 nodes & $\lambdamin(S)$ & $-4 \cdot 10^{-12}$ & $-9 \cdot 10^{-12}$ & $-1 \cdot 10^{-3}$ & $-5 \cdot 10^{-6}$ & $-2 \cdot 10^{-3}$ \\\nopagebreak
\quad 5916 edges & Time [s] & 6.4 & 8.8 & 6.5 & 5.2 & 39.6 \\
\hline

Graph 53 & Cut bound & 4009.7 & 4009.7 & 4009.7 & 4009.7 & 4010.5 \\\nopagebreak
\quad 1000 nodes & $\lambdamin(S)$ & $-1 \cdot 10^{-10}$ & $-1 \cdot 10^{-11}$ & $-6 \cdot 10^{-6}$ & $-1 \cdot 10^{-5}$ & $-3 \cdot 10^{-3}$ \\\nopagebreak
\quad 5914 edges & Time [s] & 4.2 & 8.5 & 8.3 & 5.0 & 39.1 \\
\hline

Graph 54 & Cut bound & 4006.2 & 4006.2 & 4006.2 & 4006.2 & 4006.9 \\\nopagebreak
\quad 1000 nodes & $\lambdamin(S)$ & $-2 \cdot 10^{-10}$ & $-3 \cdot 10^{-12}$ & $-3 \cdot 10^{-5}$ & $-5 \cdot 10^{-6}$ & $-3 \cdot 10^{-3}$ \\\nopagebreak
\quad 5916 edges & Time [s] & 2.9 & 6.6 & 6.1 & 4.8 & 39.1 \\
\hline

Graph 55 & Cut bound & 11039.5 & 11039.5 & 11039.5 & 11039.5 & 11039.7 \\\nopagebreak
\quad 5000 nodes & $\lambdamin(S)$ & $-2 \cdot 10^{-12}$ & $-3 \cdot 10^{-12}$ & $-5 \cdot 10^{-6}$ & $-6 \cdot 10^{-6}$ & $-2 \cdot 10^{-4}$ \\\nopagebreak
\quad 12498 edges & Time [s] & 26.6 & 20.6 & 22.2 & 411.4 & 1588.0 \\
\hline

Graph 56 & Cut bound & 4760.0 & 4760.0 & 4760.0 & 4760.0 & 4760.3 \\\nopagebreak
\quad 5000 nodes & $\lambdamin(S)$ & $-7 \cdot 10^{-12}$ & $-2 \cdot 10^{-12}$ & $-1 \cdot 10^{-5}$ & $-2 \cdot 10^{-6}$ & $-3 \cdot 10^{-4}$ \\\nopagebreak
\quad 12498 edges & Time [s] & 20.1 & 16.3 & 32.9 & 475.9 & 1550.1 \\
\hline

Graph 57 & Cut bound & 3885.5 & 3885.5 & 3885.5 & 3885.5 & 3885.7 \\\nopagebreak
\quad 5000 nodes & $\lambdamin(S)$ & $-1 \cdot 10^{-9}$ & $-8 \cdot 10^{-12}$ & $-2 \cdot 10^{-6}$ & $-2 \cdot 10^{-6}$ & $-1 \cdot 10^{-4}$ \\\nopagebreak
\quad 10000 edges & Time [s] & 218.0 & 78.8 & 38.3 & 269.8 & 1012.4 \\
\hline

Graph 58 & Cut bound & 20136.2 & 20136.2 & 20138.1 & 20136.2 & 20136.7 \\\nopagebreak
\quad 5000 nodes & $\lambdamin(S)$ & $-3 \cdot 10^{-9}$ & $-5 \cdot 10^{-11}$ & $-2 \cdot 10^{-3}$ & $-7 \cdot 10^{-6}$ & $-4 \cdot 10^{-4}$ \\\nopagebreak
\quad 29570 edges & Time [s] & 55.4 & 44.0 & 321.5 & 497.9 & 1865.7 \\
\hline

Graph 59 & Cut bound & 7312.3 & 7312.3 & 7315.0 & 7312.3 & 7313.0 \\\nopagebreak
\quad 5000 nodes & $\lambdamin(S)$ & $-7 \cdot 10^{-12}$ & $-3 \cdot 10^{-11}$ & $-2 \cdot 10^{-3}$ & $-4 \cdot 10^{-6}$ & $-5 \cdot 10^{-4}$ \\\nopagebreak
\quad 29570 edges & Time [s] & 51.3 & 35.6 & 353.1 & 511.3 & 1869.0 \\
\hline

Graph 60 & Cut bound & 15222.3 & 15222.3 & 15222.3 & 15222.3 & 15222.6 \\\nopagebreak
\quad 7000 nodes & $\lambdamin(S)$ & $-3 \cdot 10^{-11}$ & $-4 \cdot 10^{-12}$ & $-2 \cdot 10^{-5}$ & $-2 \cdot 10^{-6}$ & $-2 \cdot 10^{-4}$ \\\nopagebreak
\quad 17148 edges & Time [s] & 58.6 & 30.9 & 63.6 & 1326.9 & 3581.9 \\
\hline

Graph 61 & Cut bound & 6828.1 & 6828.1 & 6828.2 & 6828.1 & 6828.4 \\\nopagebreak
\quad 7000 nodes & $\lambdamin(S)$ & $-2 \cdot 10^{-11}$ & $-4 \cdot 10^{-12}$ & $-7 \cdot 10^{-5}$ & $-2 \cdot 10^{-6}$ & $-2 \cdot 10^{-4}$ \\\nopagebreak
\quad 17148 edges & Time [s] & 113.4 & 40.2 & 55.8 & 1263.3 & 3795.6 \\
\hline

Graph 62 & Cut bound & 5430.9 & 5430.9 & 5430.9 & 5430.9 & 5431.1 \\\nopagebreak
\quad 7000 nodes & $\lambdamin(S)$ & $-1 \cdot 10^{-9}$ & $-6 \cdot 10^{-11}$ & $-9 \cdot 10^{-7}$ & $-2 \cdot 10^{-6}$ & $-1 \cdot 10^{-4}$ \\\nopagebreak
\quad 14000 edges & Time [s] & 813.8 & 242.8 & 110.8 & 862.4 & 2124.3 \\
\hline

Graph 63 & Cut bound & 28244.4 & 28244.4 & 28245.9 & 28244.4 & 28245.0 \\\nopagebreak
\quad 7000 nodes & $\lambdamin(S)$ & $-7 \cdot 10^{-9}$ & $-8 \cdot 10^{-9}$ & $-8 \cdot 10^{-4}$ & $-9 \cdot 10^{-6}$ & $-3 \cdot 10^{-4}$ \\\nopagebreak
\quad 41459 edges & Time [s] & 238.9 & 97.6 & 663.0 & 1454.7 & 4583.9 \\
\hline

Graph 64 & Cut bound & 10465.9 & 10465.9 & 10466.6 & 10465.9 & 10466.6 \\\nopagebreak
\quad 7000 nodes & $\lambdamin(S)$ & $-3 \cdot 10^{-9}$ & $-2 \cdot 10^{-11}$ & $-4 \cdot 10^{-4}$ & $-5 \cdot 10^{-6}$ & $-4 \cdot 10^{-4}$ \\\nopagebreak
\quad 41459 edges & Time [s] & 140.4 & 109.5 & 1014.8 & 1609.4 & 4439.8 \\
\hline

Graph 65 & Cut bound & 6205.5 & 6205.5 & 6205.5 & 6205.5 & 6205.7 \\\nopagebreak
\quad 8000 nodes & $\lambdamin(S)$ & $-1 \cdot 10^{-9}$ & $-1 \cdot 10^{-11}$ & $-6 \cdot 10^{-7}$ & $-1 \cdot 10^{-6}$ & $-1 \cdot 10^{-4}$ \\\nopagebreak
\quad 16000 edges & Time [s] & 567.2 & 168.5 & 154.4 & 1075.2 & 2861.5 \\
\hline

Graph 66 & Cut bound & 7077.2 & 7077.2 & 7077.2 & 7077.2 & 7077.4 \\\nopagebreak
\quad 9000 nodes & $\lambdamin(S)$ & $-2 \cdot 10^{-9}$ & $-5 \cdot 10^{-11}$ & $-2 \cdot 10^{-7}$ & $-9 \cdot 10^{-7}$ & $-6 \cdot 10^{-5}$ \\\nopagebreak
\quad 18000 edges & Time [s] & 762.6 & 215.3 & 218.1 & 1525.7 & 3915.7 \\
\hline

Graph 67 & Cut bound & 7744.4 & 7744.4 & 7744.4 & 7744.4 & - \\\nopagebreak
\quad 10000 nodes & $\lambdamin(S)$ & $-1 \cdot 10^{-9}$ & $-3 \cdot 10^{-11}$ & $-3 \cdot 10^{-7}$ & $-1 \cdot 10^{-6}$ & - \\\nopagebreak
\quad 20000 edges & Time [s] & 816.4 & 339.0 & 267.3 & 2005.4 & - \\
\hline

Graph 70 & Cut bound & 9861.5 & 9861.5 & 9861.5 & 9861.5 & - \\\nopagebreak
\quad 10000 nodes & $\lambdamin(S)$ & $-2 \cdot 10^{-10}$ & $-6 \cdot 10^{-13}$ & $-2 \cdot 10^{-6}$ & $-2 \cdot 10^{-6}$ & - \\\nopagebreak
\quad 9999 edges & Time [s] & 143.3 & 82.9 & 102.2 & 3167.3 & - \\
\hline

Graph 72 & Cut bound & 7808.5 & 7808.5 & 7808.5 & 7808.5 & - \\\nopagebreak
\quad 10000 nodes & $\lambdamin(S)$ & $-6 \cdot 10^{-10}$ & $-8 \cdot 10^{-12}$ & $-8 \cdot 10^{-7}$ & $-1 \cdot 10^{-6}$ & - \\\nopagebreak
\quad 20000 edges & Time [s] & 720.8 & 262.6 & 199.0 & 1902.7 & - \\
\hline

Graph 77 & Cut bound & 11045.7 & 11045.7 & 11045.7 & 11045.7 & - \\\nopagebreak
\quad 14000 nodes & $\lambdamin(S)$ & $-8 \cdot 10^{-10}$ & $-4 \cdot 10^{-11}$ & $-7 \cdot 10^{-7}$ & $-1 \cdot 10^{-6}$ & - \\\nopagebreak
\quad 28000 edges & Time [s] & 1578.5 & 513.0 & 515.1 & 5249.1 & - \\
\hline

Graph 81 & Cut bound & 15656.2 & 15656.2 & 15656.2 & 15656.2 & - \\\nopagebreak
\quad 20000 nodes & $\lambdamin(S)$ & $-5 \cdot 10^{-10}$ & $-6 \cdot 10^{-11}$ & $-1 \cdot 10^{-6}$ & $-3 \cdot 10^{-6}$ & - \\\nopagebreak
\quad 40000 edges & Time [s] & 4152.8 & 1539.7 & 1035.6 & 16576.6 & - \\
\hline

	%
\end{longtable}

\section{Regularity assumption} \label{apdx:regularity}

Originally, Theorems~\ref{thm:masterthm} and~\ref{thm:approxtolerance} had the assumption that the search space of the factorized problem,
\begin{align*}
\calM & = \{ Y \in \Rnp : \calA(YY\transpose) = b \},
\end{align*}
is a manifold. From this assumption, we stated incorrectly that the tangent space at $Y$ of $\calM$, denoted by $\T_Y\calM$, is given by~\eqref{eq:TYM}:
\begin{align*}
\T_Y\calM & = \{ \dot Y \in \Rnp : \calA(\dot Y Y\transpose + Y \dot Y\transpose) = 0 \}.
\end{align*}
This identity is used in a number of places of the proofs. In general, $\calM$ being an embedded submanifold of $\Rnp$ only implies the left hand side is included in the right hand side.\footnote{If $\dot Y \in \T_Y\calM$, by definition, there exists a smooth curve $\gamma \colon \reals \to \calM$ such that $\gamma(0) = Y$ and $\gamma'(0) = \dot Y$. Since $\gamma(t) \in \calM$ for all $t$, we have $\calA(\gamma(t)\gamma(t)\transpose) = b$ for all $t$. Differentiating on both sides with respect to $t$ and evaluating at $0$ gives $\calA(\dot YY\transpose + Y\dot Y\transpose) = 0$.} Below, we give an example where $\calM$ is a manifold yet the two sets are not equal.

In order to restore equality, we strengthened the assumption, requiring constraint qualifications to hold at all feasible points (see~\eqref{eq:CQ}):
\begin{align*}
\forall Y \in \calM, \quad A_1Y, \ldots, A_mY \textrm{ are linearly independent in } \Rnp,
\end{align*}
where $A_i$, $i = 1, \ldots, m$, are the symmetric constraint matrices such that $\calA(X)_i = \inner{A_i}{X}$. This ensures the map $\Phi(Y) = \calA(YY\transpose) - b$ is full rank on $\calM = \Phi^{-1}(0)$, from which it follows by a standard result in differential geometry (see for example~\citep[Cor.~5.14]{lee2012smoothmanifolds}) that $\calM$ is a smooth embedded submanifold of $\Rnp$ of dimension $np - m$. Then, the left hand side of~\eqref{eq:TYM} has dimension $np - m$, and it is included in the right hand side, which itself is a linear space of dimension $np - m$, so that they are equal.


We now describe an SDP such that $\calM$ is indeed a manifold, yet~\eqref{eq:TYM} does not hold. Consider $n = 2, m = 2$, $b = (1, 1)\transpose$ and
\begin{align*}
A_1 & = \begin{pmatrix}
1 & 0 \\ 0 & 1
\end{pmatrix}, & A_2 & = \begin{pmatrix}
1 & 0 \\ 0 & \frac{1}{4}
\end{pmatrix}.
\end{align*}
The search space of the SDP,
\begin{align*}
\calC = \{X = X\transpose \in \reals^{n\times n} : \calA(X) = b, X \succeq 0\} = \left\{ \begin{pmatrix}
1 & 0 \\ 0 & 0
\end{pmatrix} \right\},
\end{align*}
is degenerate but it is compact. Furthermore, the set $\calM$ is a smooth manifold for $p = 1$:
\begin{align*}
\calM_{p=1} & = \left\{ Y = \begin{pmatrix}
y_1 \\ y_2
\end{pmatrix} \in \reals^2 : y_1^2 + y_2^2 = 1 \textrm{ and } y_1^2 + \frac{1}{4}y_2^2 = 1 \right\} = \{ (1, 0)\transpose, (-1, 0)\transpose \}.
\end{align*}
The dimension of the manifold is 0, so that $\T_Y\calM = \{0\}$ for all $Y\in\calM$. Consider now the right hand side of~\eqref{eq:TYM},
\begin{align*}
K_Y & = \{ \dot Y \in \Rnp : \calA(\dot Y Y\transpose + Y \dot Y\transpose) = 0 \} = \{ \dot Y \in \Rnp : \innersmall{A_1Y}{\dot Y} = \innersmall{A_2Y}{\dot Y} = 0 \}.
\end{align*}
These are the vectors orthogonal to $A_1Y, A_2Y$. For $Y = (\pm 1, 0)\transpose$, we get $A_1Y = A_2Y = (\pm 1, 0)\transpose$: they are colinear, so $K_Y$ has dimension 1 at all $Y \in \calM$: $\T_Y\calM \neq K_Y$.

Similarly, at $p = 2$, the set $\calM$ becomes a circle embedded in $\reals^4$:
\begin{align*}
\calM_{p=2} & = \left\{ Y = \begin{pmatrix}
y_1 & y_2 \\ y_3 & y_4
\end{pmatrix} \in \reals^{2\times 2} : y_1^2 + y_2^2 + y_3^2 + y_4^2 = 1 \textrm{ and } y_1^2 + y_2^2 + \frac{1}{4}(y_3^2 + y_4^2) = 1 \right\} \\
& = \left\{ Y = \begin{pmatrix}
y_1 & y_2 \\ y_3 & y_4
\end{pmatrix} \in \reals^{2\times 2} : y_1^2 + y_2^2 = 1 \textrm{ and } y_3 = y_4 = 0 \right\}.
\end{align*}
This manifold has dimension 1 (and so do all its tangent spaces). Yet, $K_Y$ has dimension 3 for all $Y \in \calM$. Indeed, we can parameterize $\calM_{p=2}$ as the matrices
\begin{align*}
\begin{pmatrix}
\cos\theta & \sin\theta \\ 0 & 0
\end{pmatrix}
\end{align*}
for all $\theta \in \reals$. It is easy to verify that $A_1Y = A_2Y \neq 0$ for all $Y\in\calM_{p=2}$, so that the codimension of $K_Y$ is 1, here too in disagreement with $\T_Y\calM$. Notice also that in this example we have $\frac{p(p+1)}{2} > m$.


\end{document}
